\theoremstyle{plain}
\newtheorem{theorem}{Theorem}
\newtheorem{proposition}{Proposition}
\newtheorem{lemma}{Lemma}
\newtheorem{corollary}{Corollary}
\theoremstyle{definition}
\newtheorem{definition}{Definition}
\newtheorem{example}{Example}
\newtheorem{notation}{Notation}
\theoremstyle{remark}
\newcommand{\four}{
\,\begin{picture}(8,8)
\put(5,3){\circle{10}}
\qbezier(0,2)(5,2)(10,2)
\qbezier(1,0)(5,0)(9,0)
\qbezier(0,3.5)(5,3.5)(10,3.5)
\qbezier(0.5,5)(5,5)(9.5,5)
\end{picture}~
}
\newcommand{\triFour}{
\,\begin{picture}(8,8)
\put(5,3){\circle{10}}
\qbezier(4,8)(2,6)(0,4)
\qbezier(4,-2)(2,0)(0,2)
\qbezier(6,8)(8,6)(10,4)
\qbezier(6,-2)(8,0)(10,2)
\end{picture}~
}
\newcommand{\trisox}{
\,\begin{picture}(8,8)
\put(5,3){\circle{10}}
\qbezier(0.5,2)(5,3)(10,4)
\qbezier(0.5,4)(5,3)(9.5,2)
\qbezier(1.5,6)(5,6)(8.5,6)
\qbezier(1.5,0)(5,0)(8.5,0)
\end{picture}~
}
\newcommand{\chordtwx}{\,
\begin{picture}(8,8)
\put(5,3){\circle{10}}
\qbezier(9,0)(9,3.5)(9,6)
\qbezier(1,0)(4.5,3.5)(8,7)
\qbezier(1.5,6.6)(5,3.3)(8.5,0)
\qbezier(2,-1)(5,-1)(8,-1)
\end{picture}~
}
\newcommand{\triii}{
\,\begin{picture}(8,8)
\put(5,3){\circle{10}}
\qbezier(1,0)(1,3)(1,6)
\qbezier(2,-1)(6,1.5)(10,4)
\qbezier(4,-2)(7,0)(10,2)
\qbezier(7,-1.5)(7,3)(7,7.5)
\end{picture}~
}
\newcommand{\trii}{
\,\begin{picture}(8,8)
\put(5,3){\circle{10}}
\qbezier(2,-1)(2,3)(2,7)
\qbezier(4,8)(7,6)(10,4)
\qbezier(4,-2)(7,0)(10,2)
\qbezier(7,-1)(7,3)(7,7)
\end{picture}~
}
\newcommand{\chordtwoc}{
\,\begin{picture}(8,8)
\put(5,3){\circle{10}}
\qbezier(1,0)(5,1)(10,2)
\qbezier(0.5,2)(5,1)(9,0)
\qbezier(0.5,5)(4,6)(8,7)
\qbezier(2,7)(5,6)(9.5,5)
\end{picture}~
}
\newcommand{\triso}{
\,\begin{picture}(8,8)
\put(5,3){\circle{10}}
\qbezier(1,0)(5,1)(10,2)
\qbezier(0.5,2)(5,1)(9,0)
\qbezier(0.5,1)(5,1)(9.5,1)
\qbezier(0.5,5)(5,5)(9.5,5)
\end{picture}~
}
\newcommand{\chordtha}{
\,\begin{picture}(8,8)
\put(5,3){\circle{10}}
\qbezier(1,0)(5,1)(10,2)
\qbezier(0.5,2)(5,1)(9,0)
\qbezier(0,3.5)(5,3.5)(10,3.5)
\qbezier(0.5,5)(5,5)(9.5,5)
\end{picture}~
}
\newcommand{\chordthab}{
\,\begin{picture}(8,8)
\put(5,3){\circle{10}}
\qbezier(4,-2)(2,0)(0,2)
\qbezier(10,2)(8,0)(6,-2)
\qbezier(0,3.5)(5,3.5)(10,3.5)
\qbezier(0.5,5)(5,5)(9.5,5)
\end{picture}~
}
\newcommand{\chordthb}{
\,\begin{picture}(8,8)
\put(5,3){\circle{10}}
\qbezier(1,0)(5,1)(10,2)
\qbezier(0.5,2)(5,1)(9,0)
\qbezier(0,4)(2,6)(4,7.5)
\qbezier(10,4)(7.5,6)(5,8)
\end{picture}~
}
\newcommand{\ma}{\,
\begin{picture}(8,8)
\put(5,3){\circle{10}}
\qbezier(6,-2)(6,3)(6,8)
\qbezier(3,7)(3,3)(3,-2)
\qbezier(1,0)(4.5,3.5)(8,7)
\qbezier(1.5,6.6)(5,3.3)(8.5,0)
\end{picture}~
}
\newcommand{\sh}{
\,\begin{picture}(8,8)
\put(5,3){\circle{10}}
\qbezier(8,-1)(8,3)(8,7)
\qbezier(1.5,6.5)(1.5,3)(1.5,-0.5)
\qbezier(0,3.5)(5,3.5)(10,3.5)
\qbezier(0,1)(5,1)(9,1)
\end{picture}~
}
\newcommand{\hthc}{
\,\begin{picture}(8,8)
\put(5,3){\circle{10}}
\qbezier(8,-1)(8,3)(8,7)
\qbezier(5,-2)(5,3)(5,8)
\qbezier(1.5,6.5)(1.5,3)(1.5,-0.5)
\qbezier(0,3.5)(5,3.5)(10,3.5)
\end{picture}~~~
}
\newcommand{\crh}{\,
\begin{picture}(8,8)
\put(5,3){\circle{10}}
\qbezier(1,0)(4.5,3.5)(8,7)
\qbezier(1.5,6.6)(5,3.3)(8.5,0)
\qbezier(4,8)(7,6)(10,4)
\qbezier(4,-2)(7,0)(10,2)
%\qbezier(2,-1)(5,-1)(8,-1)
\end{picture}~
}
\newcommand{\htri}{\,
\begin{picture}(8,8)
\qbezier(5,-2)(5,3)(5,8)
\put(5,3){\circle{10}}
\qbezier(1,0)(4.5,3.5)(8,7)
\qbezier(1.5,6.6)(5,3.3)(8.5,0)
\qbezier(2,-1)(5,-1)(8,-1)
\end{picture}~
}
\newcommand{\qua}{
\,\begin{picture}(8,8)
\put(5,3){\circle{10}}
\qbezier(1,0)(4.5,3.5)(8,7)
\qbezier(1.5,6.6)(5,3.3)(8.5,0)
\qbezier(4.5,-2)(4.5,3)(4.5,8)
\qbezier(0,3.5)(5,3.5)(10,3.5)
\end{picture}~
}
\newcommand{\Fsum}{\sum_{n_{b-1} + 1 \le i \le n_{d}} \alpha_i x_i}
\newcommand{\aFsum}{\sum_{n_{b-1} +1 \le i \le n_{d}} \alpha_i  \tilde{x}_i  }
\newcommand{\Nsum}{ \sum_{n_{b-1} + 1 \le i \le n_d}
}
\newcommand{\Njsum}{ \sum_{n_{b-1} + 1 \le j \le n_d}
}
\newcommand{\at}{\alpha_i \tilde{x}_i}
\newcommand{\sFsum}{\sum_{n_{b-1} + 1 \le i \le n_{d}} \alpha_i}
\newcommand{\aFsumIrr}{\sum_{i \in \irri} \alpha_i \tilde{x}_i}
\newcommand{\FsumIrr}{\sum_{i \in \irri} \alpha_i x_i}
\newcommand{\FsumConn}{\sum_{i \in \conni} \alpha_i x_i}
\newcommand{\aFsumConn}{\sum_{i \in \conni} \alpha_i \tilde{x}_i 
}
\newcommand{\ineq}{n_{b-1} + 1 \le i \le n_{d}}
\newcommand{\cyc}{\operatorname{cyc}}
\newcommand{\rev}{\operatorname{rev}}
\newcommand{\Rep}{R_{\epsilon_1 \epsilon_2 \epsilon_3 \epsilon_4 \epsilon_5}}
\newcommand{\x}{\,
\begin{picture}(8,8)
\put(5,3){\circle{10}}
\qbezier(1,0)(4.5,3.5)(8,7)
\qbezier(1.5,6.6)(5,3.3)(8.5,0)
\end{picture}~
}
\newcommand{\tr}{
\,\begin{picture}(8,8)
\put(5,3){\circle{10}}
\qbezier(1,0)(4.5,3.5)(8,7)
\qbezier(1.5,6.6)(5,3.3)(8.5,0)
\qbezier(4.5,-2)(4.5,3)(4.5,8)
\end{picture}~
}
\newcommand{\tri}{
\,\begin{picture}(8,8)
\put(5,3){\circle{10}}
\qbezier(2,-1)(2,3)(2,7)
\qbezier(4,8)(7,6)(10,4)
\qbezier(4,-2)(7,0)(10,2)
\end{picture}~
}
\newcommand{\thc}{
\,\begin{picture}(8,8)
\put(5,3){\circle{10}}
\qbezier(8,-1)(8,3)(8,7)
\qbezier(5,-2)(5,3)(5,8)
\qbezier(1.5,6.5)(1.5,3)(1.5,-0.5)
\end{picture}~~~
}
\newcommand{\h}{
\,\begin{picture}(8,8)
\put(5,3){\circle{10}}
\qbezier(8,-1)(8,3)(8,7)
\qbezier(1.5,6.5)(1.5,3)(1.5,-0.5)
\qbezier(0,3.5)(5,3.5)(10,3.5)
\end{picture}~
}
\newcommand{\chordtwo}{\,
\begin{picture}(8,8)
\put(5,3){\circle{10}}
\qbezier(1,0)(4.5,3.5)(8,7)
\qbezier(1.5,6.6)(5,3.3)(8.5,0)
\qbezier(2,-1)(5,-1)(8,-1)
\end{picture}~
}
\newcommand{\chordth}{
\,\begin{picture}(8,8)
\put(5,3){\circle{10}}
\qbezier(8,-1)(8,3)(8,7)
\qbezier(1.5,6.5)(1.5,3)(1.5,-0.5)
\end{picture}~
}
\newcommand{\conn}{\operatorname{Conn}}
\newcommand{\irr}{\operatorname{Irr}}
\newcommand{\irri}{{I}^{(\operatorname{Irr})}_{b, d}}
\newcommand{\irritwo}{{I}^{(\operatorname{Irr})}_{2, 3}}
\newcommand{\conni}{I^{(\operatorname{Conn})}_{b, d}}
\newcommand{\sub}{\operatorname{Sub}}
\newcommand{\ii}{{\rm{I}}}
\newcommand{\sss}{{\rm{SI\!I\!I}}}
\newcommand{\www}{{\rm{WI\!I\!I}}}
\newcommand{\s}{{\rm{SI\!I}}}
\newcommand{\w}{{\rm{WI\!I}}}
\begin{document}
\title[Space of chord diagrams on spherical curves]{Space of chord diagrams on spherical curves}
\author{Noboru Ito}
\address{Graduate School of Mathematical Sciences, The University of Tokyo, 3-8-1, Komaba, Meguro-ku, Tokyo 153-8914, Japan}
\email{noboru@ms.u-tokyo.ac.jp}
\keywords{spherical curve; plane curve; chord diagram; Reidemeister move}
%\date{\today}
\thanks{MSC2010: Primary: 57R42, Secondary: 57M99}
\maketitle

\begin{abstract}
In this paper, we give a definition of $\mathbb{Z}$-valued functions from the ambient isotopy classes of spherical/plane curves derived from chord diagrams, denoted by $\sum_i \alpha_i x_i$.  Then, we introduce certain elements of the free $\mathbb{Z}$-module generated by the chord diagrams with at most $l$ chords, called relators of Type~(\ii) ((\s), (\w), (\sss), or (\www), resp.), and introduce another function $\sum_i \alpha_i \tilde{x}_i$ derived from $\sum_i \alpha_i x_i$.  The main result (Theorem~\ref{gg_thm1}) shows that if $\sum_i \alpha_i \tilde{x}_i$ vanishes for the relators of Type~(\ii) ((\s), (\w), (\sss), or (\www), resp.), then $\sum_i \alpha_i x_i$ is invariant under the Reidemeister move of type RI (strong~RI\!I, weak~RI\!I, strong~RI\!I\!I, or weak~RI\!I\!I, resp.) that is defined in \cite{IT_weak13}.
\end{abstract}

%\ax, \atra, \atrb, \aha, \ahb, \ahc, \ahd

\section{Introduction}\label{intro}
A spherical curve (plane curve, resp.) is the image of a  generic immersion of a circle into a $2$-sphere (plane, resp.). In this paper, we study certain equivalence classes of spherical/plane curves.   
Any two spherical/plane curves can be transformed into each other by a finite sequence of Reidemeister moves, each of which is either one of types RI, RI\!I, or RI\!I\!I that is a replacement of a part of the curve as Figure~\ref{reidemei}.  
\begin{figure}[h!]
\includegraphics[width=11cm]{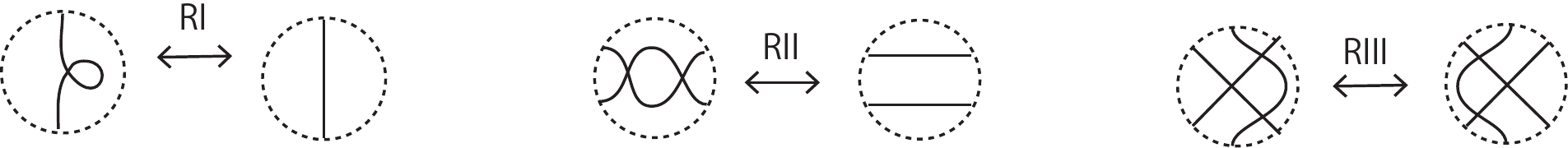}
\caption{Reidemeister moves: types RI, RI\!I, and RI\!I\!I.}\label{reidemei}
\end{figure}
These moves are obtained from Reidemeister moves of types $\Omega_1$, $\Omega_2$, and $\Omega_3$ on knot diagrams by ignoring over/under information.  It is well-known that two knot diagrams represent the same knot if and only if one knot diagram is transformed into the other knot diagram by a finite sequence of Reidemeister moves of types $\Omega_1$, $\Omega_2$, and $\Omega_3$.    

In 1990, Vassiliev \cite{Va} defined an infinite sequence of knot invariants that are at least as powerful as all of the quantum group invariants of knots. 
After that some researchers gave alternative  definitions of Vassiliev invariants (see \cite{BT}).  
In 1994, Polyak and Viro \cite{PV} introduced a formulation of Vassiliev invariants using chord diagrams, and presented explicit formulas for some Vassiliev invariants.  Goussarov \cite{G} proved that the formulation of Polyak-Viro provides all Vassiliev invariants of knots.  In 2001, \"{O}stlund further developed the theory of Polyak and Viro by using singularity theory of plane curves,  and obtained the following result \cite[Theorem~6]{ostlund}: if $v$ is a function on the set of knot diagrams that is invariant under plane isotopy, $\Omega_1$, and $\Omega_3$ and is so-called Vassiliev-type, then $v$ is invariant under $\Omega_2$.  
Motivated by this result, \"{O}stlund \cite{ostlund} posed the following question: can every plane curve be transformed into the simple closed curve by a finite sequence of Reidemeister moves of types RI and RI\!I\!I?

In 2008, Hagge and Yazinski \cite{HY} showed that the answer to this question is negative, i.e., they showed that the equivalence classes under RI and RI\!I\!I are nontrivial.  
In fact, any finite sequence of Reidemeister moves that transforms the plane curve depicted in Figure~\ref{hy} into the simple closed curve contains a Reidemeister move of type RI\!I.  
\begin{figure}[h!]
\includegraphics[width=2cm]{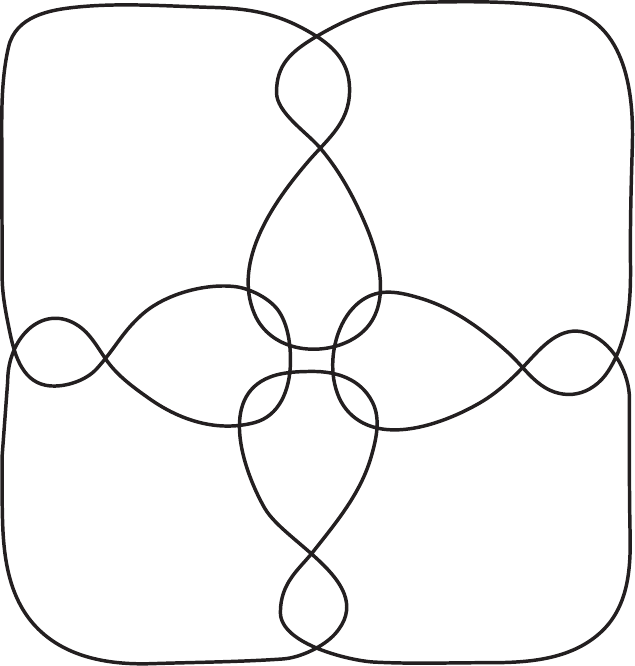}
\caption{Example by Hagge and Yazinski \cite{HY}.}\label{hy}
\end{figure}
However, to the best of the author's knowledge, 
the equivalence class containing the simple closed curve under RI and RI\!I\!I has not been  determined yet.  
In general, none of the equivalence classes under RI and RI\!I\!I have been determined yet.    
   
Viro \cite{Viro1996} suggested the idea that type RI\!I\!I is decomposed into the following two types:   
suppose that a plane curve $P_1$ is transformed into $P_2$ by a single RI\!I\!I.   In this paper, we apply the terminology to spherical curves.  
Note that in RI\!I\!I in Figure~\ref{reidemei}, a triangle is observed in each of the disks.  
We say that spherical/plane curves $P_1$ and $P_2$ are \emph{related by a strong~RI\!I\!I} if the orientations on the edges of the triangle induced by an orientation of the spherical/plane curve are coherent.  If $P_1$ and $P_2$ are not related by a strong~RI\!I\!I, then we say that $P_1$ and $P_2$ are \emph{related by a weak~RI\!I\!I}.  
Analogously, for type RI\!I, we define strong~RI\!I and weak RI\!I as follows.  Suppose that $P_1$ is transformed into $P_2$ by a single RI\!I.  We say that $P_1$ and $P_2$ are \emph{related by a strong~RI\!I} if the orientations on the edges of the triangle induced by an orientation of the spherical/plane curve are coherent.  If $P_1$ and $P_2$ are not related by a strong~RI\!I, then we say that $P_1$ and $P_2$ are \emph{related by a weak~RI\!I}.  
See Figure~\ref{fineReide}.  
\begin{figure}[h!]
\includegraphics[width=10cm]{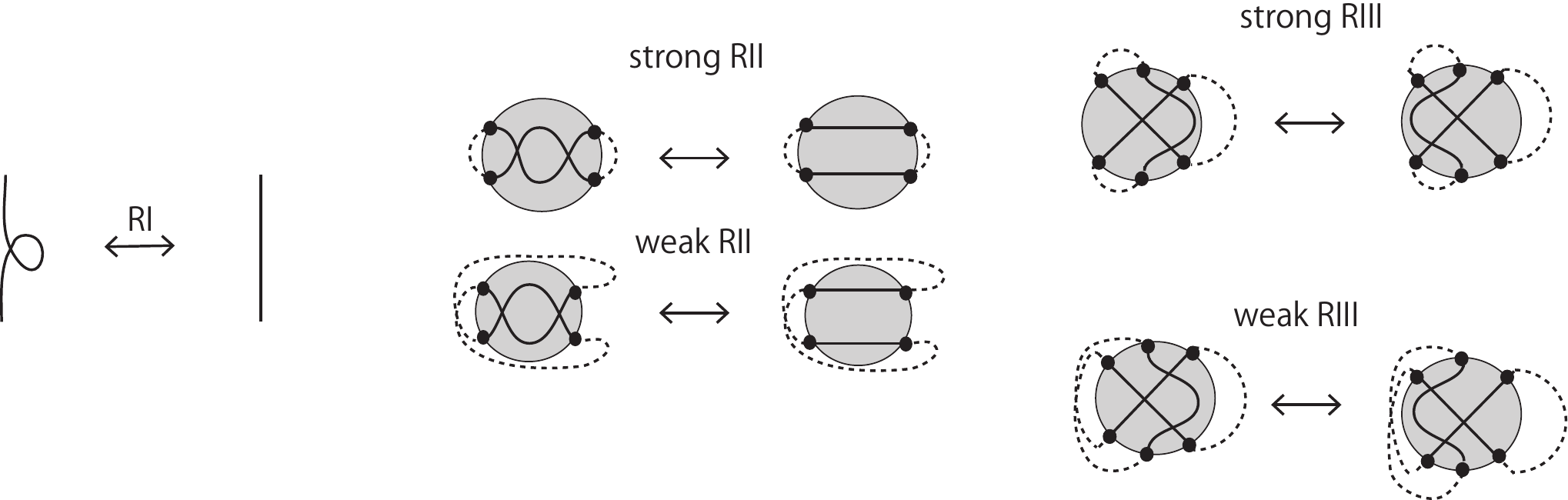}
\caption{RI, strong~RI\!I, weak~RI\!I, strong~RI\!I\!I, and weak~RI\!I\!I.}\label{fineReide}
\end{figure}

From the viewpoint of Viro,  the following problems arise from \"{O}stlund's question above: which spherical curve is transformed into the simple closed curve by a finite sequence of Reidemeister moves of types RI and  strong~RI\!I\!I (types RI and weak~RI\!I\!I, resp.)?      
First, in 2013, a necessary and sufficient condition that a spherical curve and the simple closed curve are related by a finite sequence of Reidemeister moves of types RI and weak~RI\!I\!I was given \cite{IT_weak13}.  
Second, in 2015, a necessary and sufficient condition that a spherical curve and the simple closed curve are related by a finite sequence of Reidemeister moves of types RI and strong~RI\!I\!I was given \cite{ITT}.  
After that, in 2015, different necessary and sufficient conditions using invariants were given \cite{IT3}.  
More concretely, for a chord diagram corresponding to a spherical curve, by counting the number of sub-chord diagrams of certain types, integer-valued functions $\lambda, H,$ and $\widetilde{X}$ were defined, and the following was shown: a spherical curve $P$ and the simple closed curve are related by a finite sequence of Reidemeister moves of types RI and  strong~RI\!I\!I (types RI and weak~RI\!I\!I, resp.) if and only if $H(P)=0$ and $\lambda(P)=0$ ($\widetilde{X}(P)$ $=$ $0$, resp.) \cite{IT3}.     

In this paper, we further develop the idea of counting sub-chord diagrams, which leads to functions of all equivalence classes of spherical/plane curves generated by all possible combinations of Reidemeister moves of types RI, strong~RI\!I, weak~RI\!I, strong~RI\!I\!I, and  weak~RI\!I\!I.  
In \cite{PV}, for a chord diagram $A$, and a chord diagram $G$ obtained from a knot diagram, Polyak and Viro define an integer $\langle A, G \rangle$.  
They further consider a linear combination of $\langle A, \cdot \rangle$, that is denoted by $\langle \sum_i r_i A_i, \cdot \rangle$, which is a function on knot diagrams, and show that this type of the function induces a non-trivial Vassiliev knot invariant.    
We will mimic their idea to define a function, denoted by $\sum \alpha_i x_i$, where $x_i$ denotes a chord diagram.  
Roughly speaking, $x_i (P)$ is obtained from $\langle A, G \rangle$ by ignoring ``signs and arrows'', and this means that we ignore the over/under information of crossings and orientations of knot diagrams.   
In Section~\ref{s_def}, we introduce certain elements of the free $\mathbb{Z}$-module $\mathbb{Z}[G_{\le l}]$ generated by the chord diagrams with at most $l$ chords, called relators of Type~(\ii) ((\s), (\w), (\sss), or (\www), resp.), and introduce another function $\sum_i \alpha_i \tilde{x}_i$ derived from $\sum_i \alpha_i x_i$.  
The main result (Theorem~\ref{gg_thm1}) shows that if $\sum_i \alpha_i \tilde{x}_i$ vanishes for the relators of Type~(\ii) ((\s), (\w), (\sss), or (\www), resp.), then $\sum_i \alpha_i x_i$ is invariant under RI (strong~RI\!I, weak~RI\!I, strong~RI\!I\!I, or weak~RI\!I\!I, resp.).  
In Section~\ref{sec_appl}, we show that Theorem~\ref{gg_thm1} produces a new function that is invariant under RI and strong~RI\!I\!I and is able to distinguish some pairs which the function $\lambda$ of \cite{IT3} cannot distinguish.

\section{Preliminaries and main results}\label{s_def}
\subsection{Definitions and notations}\label{section_def1}  
\begin{definition}[Gauss word]\label{d1}
A {\it{word}} $w$ of length $n$ is a map from $\hat{n}$ $=$ $\{1, 2, 3, \dots, n\}$ to $\mathbb{N}$.  This word is represented by $w(1)w(2)w(3) \cdots w(n)$.  
For a word $w : \hat{n}$ $\to$ $\mathbb{N}$, we call each element of $w(\hat{n})$ a {\it{letter}}.  
A word $u$ of length $q$ is a \emph{sub-word} of $w$ if there is an integer $p$ ($q \le p \le n$) such that $u(j)$ $=$ $w(n-p+j)$ ($1 \le j \le q$).    
A {\it{Gauss word}} of length $2n$ is a word $w$ of length $2n$ where each letter in $w(\hat{2n})$ appears exactly twice in $w(1)w(2)w(3) \cdots w(2n)$.    
Let $\cyc$ and $\rev$ be maps $\hat{2n} \to \hat{2n}$ such that $\cyc(p) \equiv p+1$ (mod $2n$) and $\rev(p) \equiv -p+1$ (mod $2n$).   
Two Gauss words, $v$ and $w$, of length $2n$ are {\it{isomorphic}} if there exists a bijection $f : v(\hat{2n})$ $\to$ $w(\hat{2n})$ satisfying the following: there exists $t \in \mathbb{Z}$ such that $w \circ (\cyc)^t \circ (\rev)^{\epsilon} = f \circ v$ ($\epsilon = 0$ or $1$).  The isomorphisms give an equivalence relation on the Gauss words.  
For a Gauss word $v$ of length $2n$, $[v]$ denotes the equivalence class containing $v$.   A Gauss word $v'$ is a {\emph{sub-Gauss word}} of the Gauss word $v$ if $v'$ is obtained from $v$ by ignoring some letters of $v$.  Then $\sub(v)$ denotes the set of sub-Gauss words of $v$.    
\end{definition}
\begin{definition}[chord diagram]\label{dfn2} 
Each configuration of $n$ pair(s) of points on a circle up to ambient isotopy and reflection of the circle is called a {\it{chord diagram}}.  The integer $n$ is called the {\it{length}} of the chord diagram.  Traditionally, two points of each pair are connected by a straight arc, called a {\it{chord}}.  
\end{definition}

We note that the equivalence classes of Gauss words of length $2n$ have one to one correspondence with the chord diagrams, each of which has $n$ chords as in Figure~\ref{four}.     
\begin{figure}[htbp]
\begin{center}
\includegraphics[width=10cm]{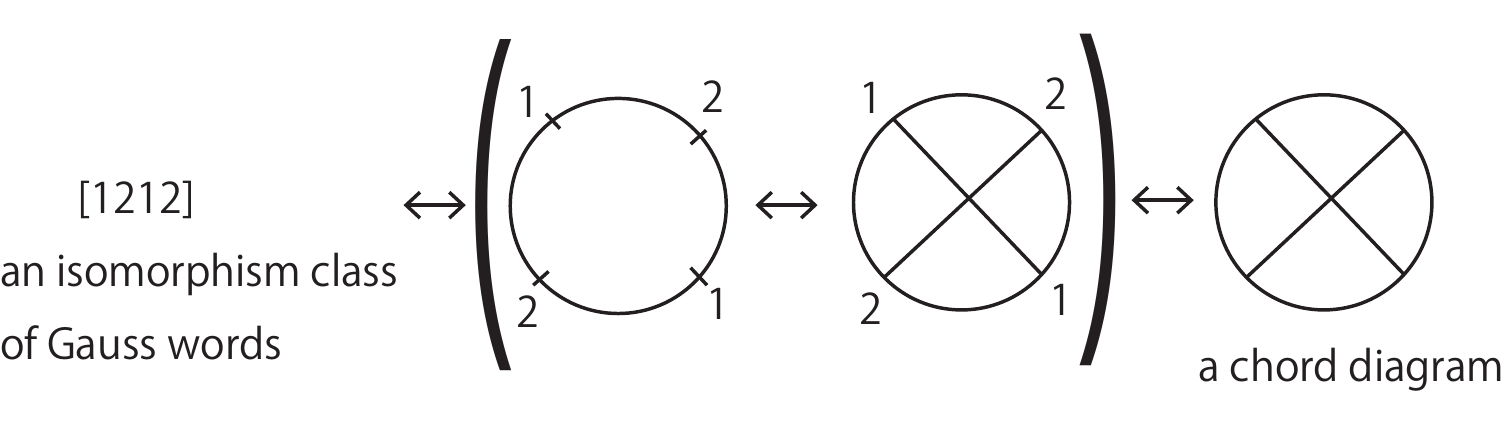}
\caption{Four expressions.}
\label{four}
\end{center}
\end{figure}
In the rest of this paper, we identify these four expressions in Figure~\ref{four}, and freely use either one of them depending on the situation.  
\begin{notation}[$G_{\le d}$, $n_d$, $G_{b, d}$]\label{not2}
Let $G_{< \infty}$ be the set of  chord diagrams, that is, the set of isomorphism classes of Gauss words.  It is clear that $G_{< \infty}$ consists of countably many elements.  Hence, there exists a bijection between $G_{< \infty}$ and $\{ x_i \}_{i \in {\mathbb{N}}}$, where $x_i$ is a variable.  Take and fix a bijection $f : G_{< \infty}$ $\to$ $\{ x_i \}_{i \in {\mathbb{N}}}$ satisfying: the number of chords of $f^{-1}(x_i)$ is less than or equal to that of $f^{-1}(x_j)$ if and only if $i \le j$ $(i, j \in {\mathbb{N}})$.  For each positive integer $d$, let $G_{\le d}$ be the set of chord diagrams consisting of at most $d$ chords and let $n_d$ $=$ $|G_{\le d}|$.  Then, it is clear that $f|_{G_{\le d}}$ is a bijection from $G_{\le d}$ to $\{ x_1, x_2, \ldots, x_{n_d} \}$.  Further, for each pair of integers $b$ and $d$ $(2 \le b \le d)$, let $G_{b, d}$ $=$ $G_{\le d} \setminus G_{\le b-1}$.      
Then, $f|_{G_{b, d}}$ is a bijection $G_{b, d}$ $\to$ $\{ x_{n_{b-1} +1}, x_{n_{b-1} + 2}, \dots, x_{n_{d}} \}$.    
\end{notation}
In the rest of this paper, we use the notations in Notation~\ref{not2} unless otherwise denoted, and we freely use this identification between $G_{< \infty}$ and $\{x_i\}_{i \in \mathbb{N}}$.
\begin{definition}[a chord diagram $CD_P$ of a spherical/plane curve $P$]\label{dfn_cdp}
Let $P$ be a spherical/plane curve, i.e. there is a generic immersion $g: S^1 \to S^2$ or $\mathbb{R}^2$ such that $g(S^1)=P$.  
We define a chord diagram of $P$ (e.g., Figure~\ref{def1}) as follows: let $k$ be the number of the double points of $P$, and $m_1, m_2, \ldots, m_k$ mutually distinct positive integers.   
\begin{figure}[h!]
\includegraphics[width=5cm]{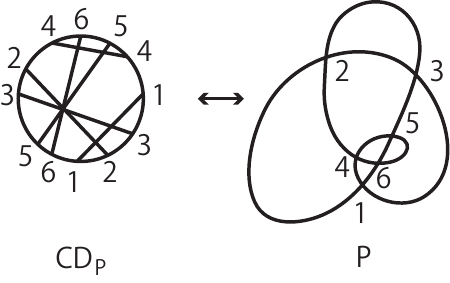}
\caption{A chord diagram $CD_P$ of a spherical/plane curve $P$.}\label{def1}
\end{figure} 
Fix a base point, which is not a double point on $P$, and choose an orientation of $P$.  Starting from the base point, proceed along $P$ according to the orientation of $P$.    
We assign $m_1$ to the first double point that we encounter.  Then we assign $m_2$ to the next double point that we encounter provided it is not the first double point.  Suppose that we have already assigned $m_1$, $m_2, \dots, m_p$.  Then we assign $m_{p+1}$ to the next double point that we encounter if it has not been assigned yet.    
Following the same procedure, we finally label all the double points.    
Note that $g^{-1}({\text{double point assigned $m_i$}})$ consists of two points on $S^1$ and we shall assign $m_i$ to them.  The chord diagram represented by $g^{-1}({\text{double point assigned $m_1$}}),$ $g^{-1}({\text{double point assigned $m_2$}}),$ $\dots,$ $g^{-1}({\text{double point assigned $m_k$}})$ on $S^1$ is denoted by $CD_P$ and is called {\it{a chord diagram of the spherical/plane curve}} $P$.  
\end{definition}
Recall that $CD_P$ gives an equivalence class of Gauss words, say $[v_P]$.  Then, by the definition of the equivalence relation, it is easy to see that the map $P \mapsto [v_P]$ is well-defined.  
\begin{notation}[$x(CD)$]\label{def_x(P)}
Let $x \in \{ x_i \}_{i \in \mathbb{N}}$ (hence, $x$ corresponds to a chord diagram).  For a given chord diagram $CD$, fix a Gauss word $G$ representing $CD$.    
Let $\sub_x (G)$ $=$ $\{H~|~H \in \sub(G)$, $[H]=x\}$.    
The cardinality of this subset is denoted by $x(G)$, i.e., $x(G)$ $=$ $|\sub_x (G)|$.    
Let $G'$ be another Gauss word representing $CD$.  
By the definition of the isomorphism of Gauss words, it is easy to see $x(G')=x(G)$.  Hence, we shall denote this number by $x(CD)$.     
If $CD$ is a chord diagram of a spherical/plane  curve $P$, then $x(CD)$ can be denoted by $x(P)$. 
\end{notation}
Since each equivalence class of Gauss words is identified with a chord diagram, we can calculate the number $x(CD)$ using geometric observations.  We explain this philosophy in the next example.  

\begin{example}\label{example1}
We consider the chord diagram $CD$ in Figure~\ref{def2} (Note that $CD$ $=$ $CD_P$ in Figure~\ref{def1}).  Then we label the chords of $CD$ by $\alpha_i$ $(1 \le i \le 6)$ as in Figure~\ref{def2}.   
Consider the subset of the power set of $\{ \alpha_1, \alpha_2, \dots, \alpha_6 \}$, each element of which represents a chord diagram 
isomorphic to $\otimes$.  It is elementary to see that this subset consists of ten elements, those are, $\{\alpha_1, \alpha_2\}$, $\{\alpha_1, \alpha_3\}$, $\{\alpha_1, \alpha_4\}$, $\{\alpha_2, \alpha_3\}$, $\{\alpha_2, \alpha_4\}$, $\{\alpha_3, \alpha_4\}$, $\{\alpha_1, \alpha_5\}$, $\{\alpha_2, \alpha_5\}$, $\{\alpha_3, \alpha_6\}$, and $\{\alpha_4, \alpha_6\}$, and that fact shows that $\x(CD)=10$.  Similarly, we have $\tr(CD)=6$ and $\h(CD)=8$.    
\begin{figure}[h!]
\includegraphics[width=3cm]{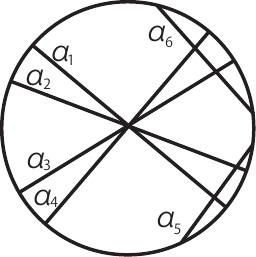}
\caption{$CD$.  
}\label{def2}
\end{figure}
\end{example}
Let $CD$ be a chord diagram consisting of chords $\alpha_1, \alpha_2, \dots, \alpha_k$.  Then the chord diagram consisting of a subset of $\{ \alpha_1, \alpha_2, \dots, \alpha_k \}$ is called a {\it{sub-chord diagram}} of $CD$. 
\begin{definition}[$\tilde{x}(z), \tilde{x}({[z]})$]\label{tilde_l}
Let $x$ be a chord diagram.  
We define the function $\tilde{x}$ from the set of Gauss words to $\{ 0, 1 \}$ by
 \[
{\tilde{x}} (F) = \begin{cases}
1 \quad [F] = x \\
0 \quad [F] \neq x.
\end{cases}
\]  
By definition, it is easy to see $\tilde{x}(F_1)$ $=$ $\tilde{x}(F_2)$ for each pair $F_1$ and $F_2$ with $[F_1]=[F_2]$.  Hence, we shall denote this number by $\tilde{x}([F_1])$.  If $[F]$ corresponds to a chord diagram of a spherical/plane curve $P$, then $\tilde{x}([F])$ is denoted by $\tilde{x}(P)$.  
Further, let $\mathbb{Z}[G_{\le l}]$ be the free $\mathbb{Z}$-module generated by the elements of $G_{\le l}$, where $l$ is sufficiently large.  
We linearly extend $\tilde{x}$ to a function from $\mathbb{Z}[G_{\le l}]$ to $\mathbb{Z}$.  
It is clear that for any Gauss word with $[G] = CD$, 
\begin{equation}\label{tilde_x}
x(CD) = \sum_{z \in \sub(G)} \tilde{x} (z).\end{equation}
\end{definition}

\subsection{Relators and Reidemeister moves}\label{rr1}
Let $\mathbb{Z}[G_{\le l}]$ be the free $\mathbb{Z}$-module defined in Subsection~\ref{section_def1}.     
In this subsection, first we define the elements of $\mathbb{Z}[G_{\le l}]$ called relators of types (I), (SI\!I), (WI\!I), (SI\!I\!I), and (WI\!I\!I).  
\begin{definition}[Relators, cf.~Figure~\ref{relator1a}]\label{def_relators}
\begin{itemize}
\item Type (I). An element $r$ of $\mathbb{Z}[G_{\le l}]$ is called a {\it{Type}}~(\ii)~{\it{relator}} if there exist a Gauss word $S$ and a letter $i$ not in $S$ such that $r$ $=$ $[Sii]$.     
\item Type (SI\!I).   
An element $r$ of $\mathbb{Z}[G_{\le l}]$ is called a {\it{Type}}~(\s)~{\it{relator}} if there exist a Gauss word $ST$ and letters $i$ and $j$ not in $ST$ such that $r$ $=$ $  
[SijTji]$ $+$ $[SiTi]$ $+$ $[SjTj]$.    
%For each Type~(\s)~relator, the chord corresponding to the letter $i$ or $j$ is called (\s)-chord.
\item Type (WI\!I).  
An element $r$ of $\mathbb{Z}[G_{\le l}]$ is called a {\it{Type}}~(\w)~{\it{relator}} if there exists a Gauss word $ST$ and letters $i$ and $j$ not in $ST$ such that $r$ $=$ $
[SijTij]$ $+$ $[SiTi]$ $+$ $[SjTj]$.  
\item Type (SI\!I\!I).  
An element $r$ of $\mathbb{Z}[G_{\le l}]$ is called a {\it{Type}}~(\sss)~{\it{relator}} if there exists a Gauss word $STU$ and letters $i$, $j$, $k$ not in $STU$ such that $r$ $=$ $([SijTkiUjk]$ $+$ $[SijTiUj]$ $+$ $[SiTkiUk]$ $+$ $[SjTkUjk])$ $-$ $([SjiTikUkj]$ $+$ $[SjiTiUj]$ $+$ $[SiTikUk]$ $+$ $[SjTkUkj])$.  
\item Type (WI\!I\!I).  
An element $r$ of $\mathbb{Z}[G_{\le l}]$ is called a {\it{Type}}~(\www)~{\it{relator}} if there exists a Gauss word $STU$ and letters $i$, $j$, $k$ not in $STU$ such that $r$ $=$ $( [SijTikUjk]$ $+$ $[SijTiUj]$ $+$ $[SiTikUk]$ $+$ $[SjTkUjk] )$ $-$  $( [SjiTkiUkj]$ $+$ $[SjiTiUj]$ $+$ $[SiTkiUk]$ $+$ $[SjTkUkj] )$.
\end{itemize} 
\end{definition}
\begin{figure}
\includegraphics[width=8cm]{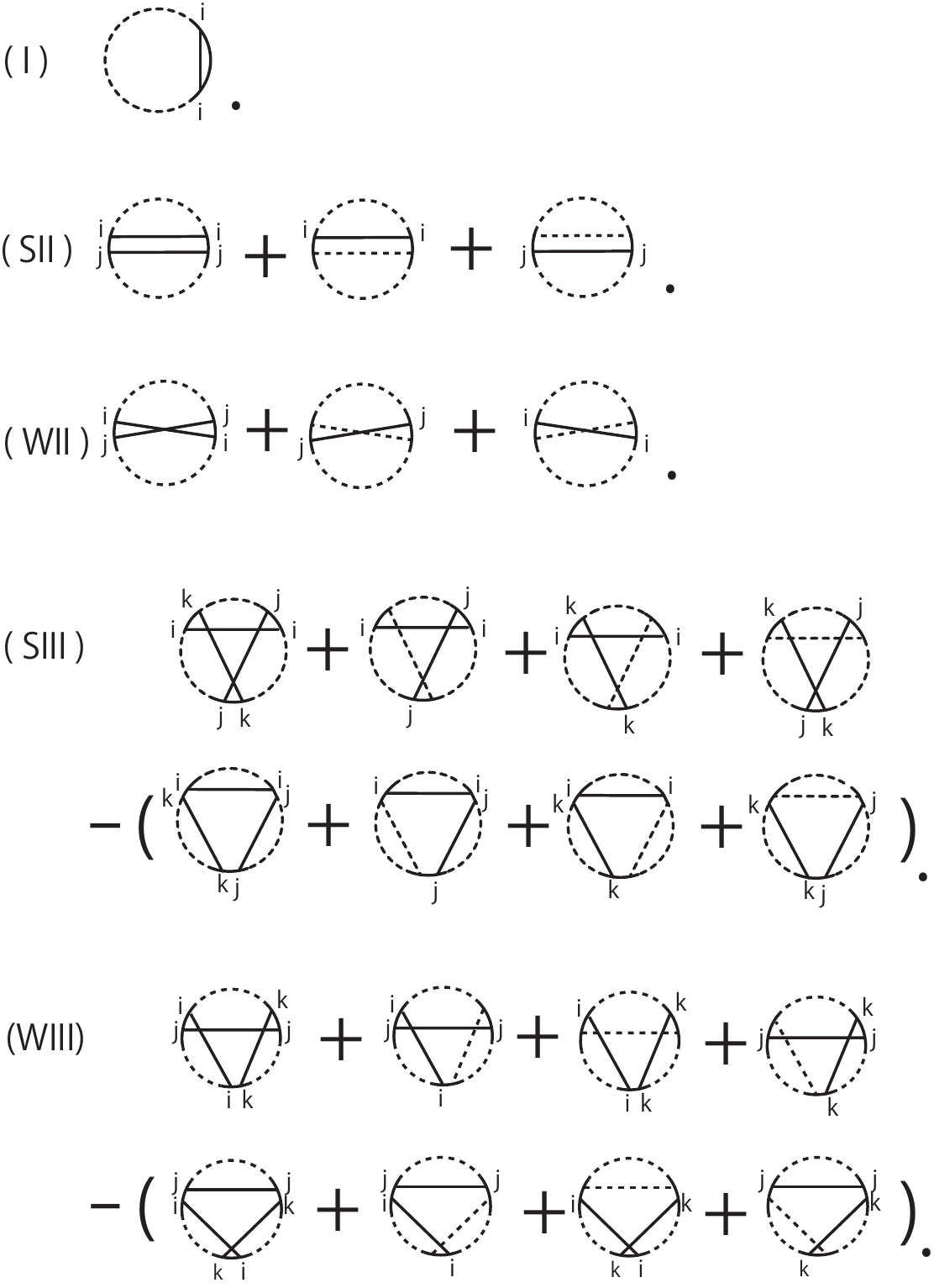}
\caption{Relators.}\label{relator1a}
\end{figure}

We note that in Definition~\ref{def_relators}, each relator is induced by the corresponding Reidemeister move: Type (I) relator corresponding to RI, Type (SI\!I) relator corresponding to strong~RI\!I, Type (WI\!I) relator corresponding to weak~RI\!I, Type (SI\!I\!I) relator corresponding to strong~RI\!I\!I, and Type (WI\!I\!I) relator corresponding to weak~RI\!I\!I.  For the precise statement of this note, we introduce the following setting.  
\begin{figure}[h!]
\includegraphics[width=12cm]{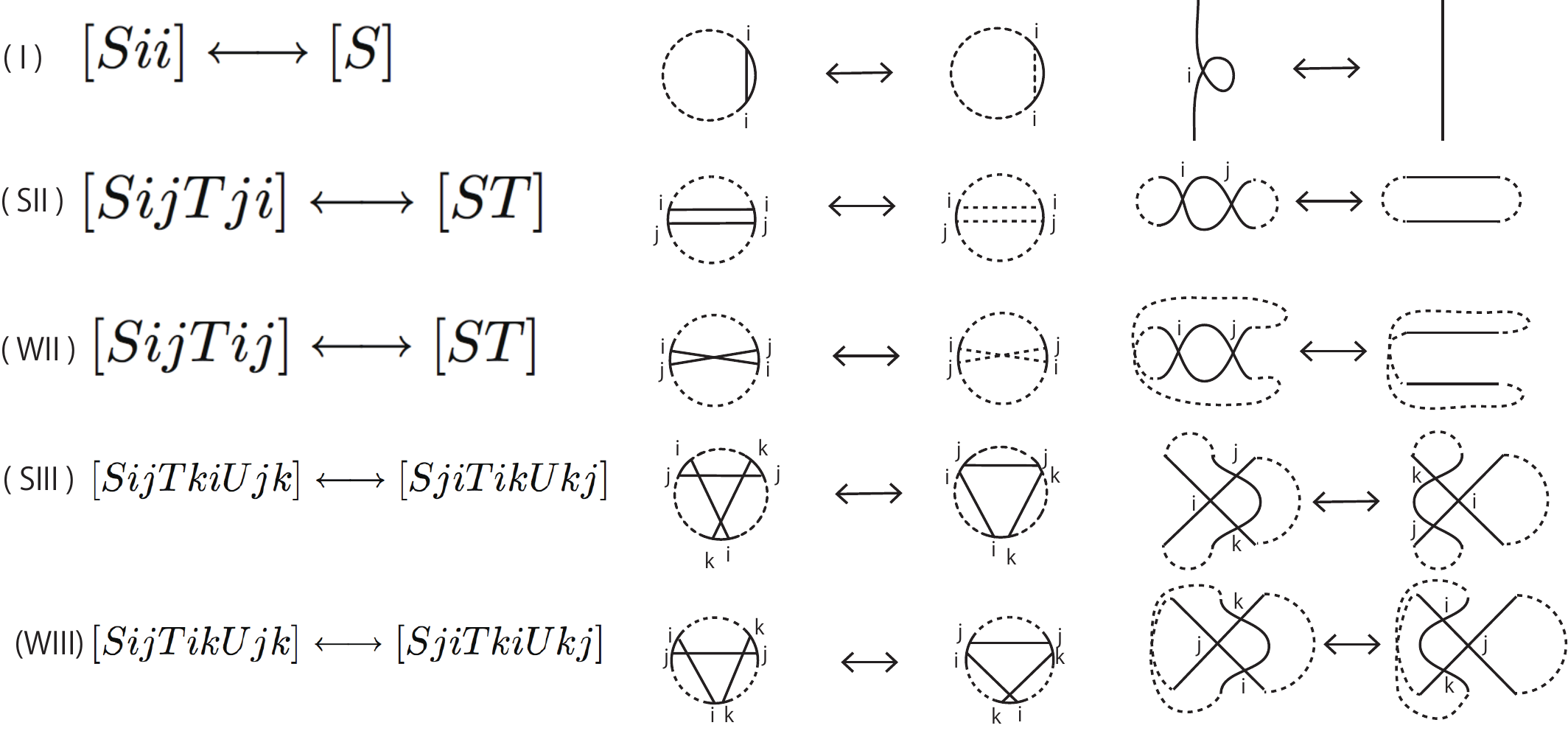}
\caption{Reidemeister moves.}\label{def3}
\end{figure}

We first introduce the following notation.    
Let $P$ and $P'$ be two spherical/plane curves.  
If $P$ and $P'$ are related by a single RI (strong~RI\!I, weak~RI\!I, strong~RI\!I\!I, or weak~RI\!I\!I resp.), then there are Gauss words $G$ and $G'$ such that (by exchanging $P$ and $P'$, if necessary), $G$ $=$ $Sii$ ($SijTji$, $SijTij$, $SijTkiUjk$, or $SijTikUjk$ resp.) and $G'=S$ ($ST$, $ST$, $SjiTikUkj$, or $SjiTkiUkj$ resp.) such that $[G]=CD_P$ and $[G']=CD_{P'}$ (Figure~\ref{def3}).    The subset of $\sub(G)$ such that each element has exactly $m$ letters of $i$, $j$, and $k$ is denoted by $\sub^{(m)} (G)$.    By definition, 
\begin{equation}\label{sub}
\sub(G) = \sub^{(0)}(G) \amalg \sub^{(1)} (G) \amalg \sub^{(2)} (G) \amalg \sub^{(3)} (G).   
\end{equation}
Similarly, for a chord diagram $x$, $\sub^{(m)}_x (G)$ denotes the subset of $\sub_x(G)$ consisting of elements, each of which has exactly $m$ letters of $i$, $j$, and $k$.  Then,  
\begin{equation}\label{subx}
\sub_x (G) = \sub^{(0)}_x (G) \amalg \sub^{(1)}_x (G) \amalg \sub^{(2)}_x (G) \amalg \sub^{(3)}_x (G).  
\end{equation}

Let $\mathcal{C}$ be the set of ambient isotopy classes of spherical/plane curves.  
Next, for each element $\sum_i \alpha_i x_i \in \mathbb{Z}[G_{\le l}]$, we define a function $\mathcal{C}$ $\to$ $\mathbb{Z}$, also denoted by $\sum_i \alpha_i x_i$, and another function denoted by $\sum_i \alpha_i \tilde{x}_i$.      
\begin{definition}[$\sum_i \alpha_i x_i$, $\sum_i \alpha_i \tilde{x}_i$]\label{def_iv}
Let $b$ and $d$ ($2 \le b \le d$) be integers and $G_{\le d}$, $G_{b, d}$, $\{ x_i \}_{i \in \mathbb{N}}$, and $\mathbb{Z}[G_{\le l}]$ be as in Subsection~\ref{section_def1}.     
Recall that $n_d$ $=$ $|G_{\le d}|$ and $G_{b, d}$ $=$ $\{ x_i \}_{n_{b-1} +1 \le i \le n_{d}}$ and each $x_i$ represents the chord diagram $f^{-1}(x_i)$.  For each element
\[ \sum_{ n_{b-1} +1 \le i \le n_{d} } \alpha_i x_i \in \mathbb{Z}[G_{\le l}], \]
we define an integer-valued function $\mathcal{C}$ $\to$ $\mathbb{Z}$, also denoted by $\Fsum$, by
\[P \mapsto \Fsum (P),\]
where $x_i (P)$ is the integer introduced in Notation~\ref{def_x(P)}.   

Analogously, for each $\Fsum \in \mathbb{Z}[G_{\le l}]$, we define the function
\[
\Nsum \at : \mathbb{Z}[G_{\le l}] \to \mathbb{Z}
\]
by
\[
\left( \Nsum \at \right)(P) = \Nsum \at(P),   
\]
where $\tilde{x}_i (P)$ is the integer introduced in Definition~\ref{tilde_l}.
\end{definition}

Recall that if a spherical/plane curve $P$ and another spherical/plane curve $P'$ are related by a single RI, then there exist a letter $i$ and a Gauss word $S$ such that $CD_{P}$ $=$ $[Sii]$ and $CD_{P'}$ $=$ $[S]$.  
Note that in this case in the decomposition (\ref{sub}), $\sub^{(2)}(G)$ $=$ $\emptyset$ and $\sub^{(3)}(G)$ $=$ $\emptyset$, where $G=Sii$.  
Then, by (\ref{tilde_x}) in Definition~\ref{tilde_l} and (\ref{sub}), 
\begin{equation*}
\begin{split}
\Fsum &(P) = \sFsum \left( \sum_{z \in \sub(G)} \tilde{x}_i (z) \right) \\
\!\!\!&= \sFsum \left( \sum_{z_0 \in \sub^{(0)}(G)} \tilde{x}_i (z_0) + \sum_{z_1 \in \sub^{(1)}(G)} \tilde{x}_i (z_1) \right).    
\end{split}
\end{equation*}
Note that each element $z_0 \in$ $\sub^{(0)}(G)$ is a sub-Gauss word of $S$.  Then it is clear that $\sub^{(1)}(G)$ $=$ $\{ z_0 ii~|~z_0 \in \sub^{(0)}(G) \}$.  Hence, 
\begin{align*}
\Fsum (P) 
&= \sFsum \left( \sum_{z_0 \in \sub^{(0)}(G)} \tilde{x}_i (z_0) + \sum_{z_0 \in \sub^{(0)}(G)} \tilde{x}_i (z_0 ii) \right) \\
&= \sFsum \left( \sum_{z_0 \in \sub^{(0)}(G)} \tilde{x}_i ([z_0]) + \sum_{z_0 \in \sub^{(0)}(G)} \tilde{x}_i ([z_0 ii]) \right).       
\end{align*}
On the other hand, since $\sub(G')$ is identified with $\sub^{(0)}(G)$,   
\begin{align*}
\Fsum(P') &= \sFsum \left(\sum_{z' \in \sub (G')} \tilde{x}_i (z') \right) (\because (\ref{tilde_x})) \\
&=\sFsum \left(\sum_{z_0 \in \sub^{(0)} (G)} \tilde{x}_i (z_0) \right)\\
&= \sFsum \left( \sum_{z_0 \in \sub^{(0)} (G)} \tilde{x}_i \big( [z_0] \big) \right).  
\end{align*}
As a conclusion, the difference of the values is calculated as follows. 
\begin{align*}
\Fsum(P) &-\Fsum(P')\\
&= \sum_{n_{b-1} + 1 \le i \le n_d} \sum_{z_0 \in \sub^{(0)}(G)} \alpha_i \tilde{x}_i \left( [z_0ii] \right).
\end{align*}
We note that this is a linear combination of the values of Type~(\ii) relators via $\tilde{x}_i$.   
  
For the cases of Type (\s), (\w), (\sss), or (\www) relators, the arguments are slightly more complicated than that of Type (\ii) relator.  We will explain them in the proof of Theorem~\ref{gg_thm1} and thus, we omit them here.       
\begin{definition}[$R_{\epsilon_1 \epsilon_2 \epsilon_3 \epsilon_4 \epsilon_5}$]\label{dfn_relator1}
For each $(\epsilon_1, \epsilon_2, \epsilon_3, \epsilon_4, \epsilon_5) \in \{0, 1\}^{5}$, let $R_{\epsilon_1 \epsilon_2 \epsilon_3 \epsilon_4 \epsilon_5}$ $=$ $\cup_{\epsilon_i = 1} R_i$ ($\subset \cup_{l \ge 1} \mathbb{Z}[G_{\le l}]$), where $R_1$ is the set of Type (I) relators, $R_2$ is the set of Type (SI\!I) relators, $R_3$ is the set of Type (WI\!I) relators, $R_4$ is the set of Type (SI\!I\!I) relators, and $R_5$ is the set of Type (WI\!I\!I) relators.     
\end{definition}

For integers $b$ and $d$ ($2 \le b \le d$), let $O_{b, d}$ be the projection $\mathbb{Z}[G_{\le l}]$ $\to$ $\mathbb{Z}[G_{b, d}]$.  Here, note that $O_{b, d}$ is a linear map.  By the definition, we immediately have:
\begin{lemma}\label{lem_relator}
If $\ineq$, then for any $r \in \mathbb{Z}[G_{\le l}]$, 
\[
\tilde{x}_i (r) = \tilde{x}_i (O_{b, d} (r)).  
\]
\end{lemma}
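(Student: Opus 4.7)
The plan is to unwind definitions and use linearity, since the assertion is essentially tautological once everything is placed on basis vectors. First, I would expand an arbitrary element as $r = \sum_{j=1}^{n_l} \beta_j x_j \in \mathbb{Z}[G_{\le l}]$ using the basis $\{x_1, \dots, x_{n_l}\}$ fixed in Notation~\ref{not2}. Under the projection $O_{b,d}$, this becomes $O_{b,d}(r) = \sum_{n_{b-1}+1 \le j \le n_d} \beta_j x_j$, where the terms $x_j$ with $j \le n_{b-1}$ or $j > n_d$ (i.e.\ those outside $G_{b,d}$) are killed.

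Next, I would invoke the linearity of $\tilde{x}_i$ (which is built into Definition~\ref{tilde_l}) on both sides. By the very definition of $\tilde{x}_i$ on generators, $\tilde{x}_i(x_j) = \delta_{ij}$, so
\[
\tilde{x}_i(r) \;=\; \sum_{j=1}^{n_l} \beta_j\, \tilde{x}_i(x_j) \;=\; \beta_i,
\qquad
\tilde{x}_i(O_{b,d}(r)) \;=\; \sum_{n_{b-1}+1 \le j \le n_d} \beta_j\, \tilde{x}_i(x_j).
\]
The hypothesis $n_{b-1}+1 \le i \le n_d$ is exactly what says that $x_i \in G_{b,d}$, so the index $i$ lies in the range of summation on the right; hence the right-hand side also collapses to $\beta_i$.

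Thus both quantities equal $\beta_i$ and the lemma follows. There is no real obstacle here: the statement is a bookkeeping observation expressing the compatibility between the support of the indicator functional $\tilde{x}_i$ and the projection $O_{b,d}$, and the only substantive point is the identification $x_i \in G_{b,d} \iff n_{b-1}+1 \le i \le n_d$ that is built into Notation~\ref{not2}. The value of the lemma lies not in its proof but in its later use, where it will allow one to replace a relator $r$ lying a priori in $\mathbb{Z}[G_{\le l}]$ by its truncation $O_{b,d}(r)$ whenever the functions $\tilde{x}_i$ under consideration are indexed by $i$ in the range $[n_{b-1}+1, n_d]$.
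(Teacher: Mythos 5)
Your proof is correct and matches the paper's treatment: the paper gives no explicit argument, stating only that the lemma follows ``by the definition,'' and your basis expansion with $\tilde{x}_i(x_j)=\delta_{ij}$ is precisely the routine verification being left implicit. Nothing further is needed.
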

\begin{notation}\label{orep1}
Let $R_{\epsilon_1 \epsilon_2 \epsilon_3 \epsilon_4 \epsilon_5}(b, d)$ $=$ $O_{b, d}(R_{\epsilon_1 \epsilon_2 \epsilon_3 \epsilon_4 \epsilon_5})$. 
\end{notation}
By using Lemma~\ref{lem_relator}, we have the next proposition.   
\begin{proposition}\label{relator_prop}
For each pair of integers $b$ and $d$ $(2 \le b \le d)$, let $\Nsum \at$ be a function as in Definition~\ref{def_iv}.  
For $(\epsilon_1, \epsilon_2, \epsilon_3, \epsilon_4, \epsilon_5)$ $\in \{ 0, 1 \}^5$, let $\Rep$ be the set as in Definition~\ref{dfn_relator1}.
The following two statements are equivalent: 
\begin{enumerate}
\item $\Nsum \alpha_i \tilde{x}_i (r) = 0 \quad (\forall r \in \Rep)$.\label{s1}
\item $\Nsum \alpha_i \tilde{x}_i (r) = 0 \quad (\forall r \in R_{\epsilon_1 \epsilon_2 \epsilon_3 \epsilon_4 \epsilon_5}(b, d))$.\label{s2}
\end{enumerate}
\end{proposition}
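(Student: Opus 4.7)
The plan is to derive both implications directly from Lemma~\ref{lem_relator} combined with the definition of $R_{\epsilon_1 \epsilon_2 \epsilon_3 \epsilon_4 \epsilon_5}(b,d)$ as the image $O_{b,d}(R_{\epsilon_1 \epsilon_2 \epsilon_3 \epsilon_4 \epsilon_5})$. The key observation is that $\sum_i \alpha_i \tilde{x}_i$ is a $\mathbb{Z}$-linear combination in which every index $i$ satisfies $n_{b-1}+1 \le i \le n_d$, so by Lemma~\ref{lem_relator} the value of $\sum_i \alpha_i \tilde{x}_i$ on any $r \in \mathbb{Z}[G_{\le l}]$ coincides with its value on $O_{b,d}(r)$. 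Thus restricting to the truncated module $\mathbb{Z}[G_{b,d}]$ does not change the value of the functional.

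For the implication $(\ref{s1}) \Rightarrow (\ref{s2})$, I would take an arbitrary $r' \in R_{\epsilon_1 \epsilon_2 \epsilon_3 \epsilon_4 \epsilon_5}(b,d)$. By Notation~\ref{orep1}, $r' = O_{b,d}(r)$ for some $r \in R_{\epsilon_1 \epsilon_2 \epsilon_3 \epsilon_4 \epsilon_5}$. Applying Lemma~\ref{lem_relator} termwise,
\[
\Nsum \alpha_i \tilde{x}_i(r') \;=\; \Nsum \alpha_i \tilde{x}_i\bigl(O_{b,d}(r)\bigr) \;=\; \Nsum \alpha_i \tilde{x}_i(r) \;=\; 0,
\]
where the last equality uses hypothesis (\ref{s1}).

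For the converse $(\ref{s2}) \Rightarrow (\ref{s1})$, I would take an arbitrary $r \in R_{\epsilon_1 \epsilon_2 \epsilon_3 \epsilon_4 \epsilon_5}$. Since $O_{b,d}$ is linear, $O_{b,d}(r) \in R_{\epsilon_1 \epsilon_2 \epsilon_3 \epsilon_4 \epsilon_5}(b,d)$. By Lemma~\ref{lem_relator} again,
\[
\Nsum \alpha_i \tilde{x}_i(r) \;=\; \Nsum \alpha_i \tilde{x}_i\bigl(O_{b,d}(r)\bigr) \;=\; 0,
\]
where the last equality uses hypothesis (\ref{s2}).

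There is no genuine obstacle here: the proposition is essentially a bookkeeping statement asserting that when the functional involves only chords in the window $[n_{b-1}+1, n_d]$, truncation commutes with evaluation. The only thing to verify carefully is that every $r' \in R_{\epsilon_1 \epsilon_2 \epsilon_3 \epsilon_4 \epsilon_5}(b,d)$ really does come from some $r$ in the untruncated set, which is automatic from the definition $R_{\epsilon_1 \epsilon_2 \epsilon_3 \epsilon_4 \epsilon_5}(b,d) = O_{b,d}(R_{\epsilon_1 \epsilon_2 \epsilon_3 \epsilon_4 \epsilon_5})$.
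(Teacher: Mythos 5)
Your proof is correct and follows essentially the same route as the paper: both directions reduce to Lemma~\ref{lem_relator} together with the definition $R_{\epsilon_1 \epsilon_2 \epsilon_3 \epsilon_4 \epsilon_5}(b,d) = O_{b,d}(R_{\epsilon_1 \epsilon_2 \epsilon_3 \epsilon_4 \epsilon_5})$ (the paper simply labels the forward implication ``obvious'' and writes out only the converse). One cosmetic remark: in the converse you attribute the membership $O_{b,d}(r) \in R_{\epsilon_1 \epsilon_2 \epsilon_3 \epsilon_4 \epsilon_5}(b,d)$ to linearity of $O_{b,d}$, whereas it is immediate from the definition of the set as the image under $O_{b,d}$; this does not affect the argument.
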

\begin{proof}
\begin{itemize}
\item $(\ref{s1}) \Rightarrow (\ref{s2})$.  This is obvious.  
\item $(\ref{s1}) \Leftarrow (\ref{s2})$. Let $r \in \Rep$.  By Lemma~\ref{lem_relator},  
\[
\Nsum \at (r) = \Nsum \at (O_{b, d}(r)).  
\]
By condition (\ref{s2}), 
\[
\Nsum \at (O_{b, d}(r)) = 0.  
\]
Then, 
\[
\Nsum \at (r) = 0.  
\]
\end{itemize}
\end{proof}
We note that it is a simple task to give an explicit presentation of $\Rep(b, d)$.  In particular, if $d \le 4$, one can do it by hands.  The next example shows the concrete such process for a certain case.     
\begin{example}[$R_{00010}(2, 3)$]\label{ex1_relator}
$R_{00010}(2, 3)$ $=$ $\{ \tr + 3 \x - 3 \chordth - \tri, 3 \chordtwo - 2 \thc - \tri, \tr + \h - 2 \chordtwo \}$.  This fact is confirmed as follows.  

Recall that a relator of Type~(\sss) is of the form $r$ $=$ $([SijTkiUjk]$ $+$ $[SijTiUj]$ $+$ $[SiTkiUk]$ $+$ $[SjTkUjk])$ $-$ $([SjiTikUkj]$ $+$ $[SjiTiUj]$ $+$ $[SiTikUk]$ $+$ $[SjTkUkj])$.    
We immediately see, from this expression, that each term of $r$ consists of $n$ or $n+1$ letters for some $n$.  
Note that $O_{2, 3}(r)$ is obtained from $r$ by removing the terms of length $< 2$, or $> 3$. Hence it is enough to suppose that each term of $r$ consists of two or three letters (Case~1) or each term of $r$ consists of three or four letters (Case~2).      

\noindent$\bullet$ Case~1. Each term of $r$ consists of two or three letters.  

In this case, we have $STU$ $=$ $\emptyset$ (i.e., $S=\emptyset$, $T=\emptyset$, and $U=\emptyset$), and, hence, 
\begin{align*}
O_{2, 3}(r) &= r = [ijkijk] + [ijij] + [ikik] + [jkjk] - [jiikkj] - [jiij] - [iikk] - [jkkj] \\ 
&= [ijkijk] + 3[ijij] - 3 [iijj] - [iijjkk].
\end{align*}

\noindent $\bullet$ Case~2. Each term of $r$ consists of three or four letters.  

In this case,  
$STU$ $=$ $\sigma \sigma$
 for some letter $\sigma$ ($\neq i, j, k$).  

Case~2.1. Both $\sigma$'s are contained in one of the sub-words $S$, $T$, and $U$.

Suppose that both $\sigma$'s are contained in $S$.   Then, 
\begin{align*}
r&= [\sigma \sigma ijkijk] + [\sigma \sigma ijij] + [\sigma \sigma ikik] + [\sigma \sigma jkjk] - [\sigma \sigma jiikkj] - [\sigma \sigma jiij] \\&\qquad - [\sigma\sigma iikk] - [\sigma\sigma jkkj]\\
&= [\sigma \sigma ijkijk] - [\sigma \sigma jiikkj] + 3[ijijkk] - 2[ijkkji] - [iijjkk].
\end{align*}
Hence, 
\[
O_{2, 3}(r) = 3[ijijkk] - 2[ijkkji] - [iijjkk].
\]
By using similar arguments, we can show that we obtain the same element as above in other cases.  Details of the calculations are left to the reader.

Case~2.2.  The two $\sigma$'s are contained in mutually different sub-words $S$, $T$, $U$.

Suppose that one $\sigma$ is contained in $S$ and the other $\sigma$ is contained in $T$.  Then  
\begin{align*}
r &= [\sigma ij\sigma kijk] + [\sigma ij \sigma ij] + [\sigma i \sigma kik] + [\sigma j \sigma kjk] - [\sigma ji\sigma ikkj] - [\sigma ji\sigma ij] \\& \qquad - [\sigma i\sigma ikk] - [\sigma j\sigma kkj]. 
\end{align*}
Here, we note that
\[
[\sigma i\sigma ikk] = [\sigma j\sigma kkj] = [ijijkk]
\]
and
\[
[\sigma i \sigma kik] = [\sigma j \sigma kjk] = [\sigma ji\sigma ij] = [ijkikj].
\]
These show that
\begin{align*}
r&= [\sigma ij \sigma kijk] - [\sigma ji \sigma ikkj] + [ijkijk] + [ijkikj] - 2 [ijijkk].
\end{align*}
Hence, 
\[
O_{2, 3}(r) = [ijkijk] + [ijkikj] - 2 [ijijkk].
\]
By using similar arguments, we can show that we obtain the same element as above in other cases.  Details of the calculations are left to the reader.     

As a conclusion, 
$R_{00010} (2, 3)$ $=$ $\{ [ijkijk] + 3[ijij] - 3 [iijj] - [jiikkj],$ $3[ijijkk] - 2[ijkkji] - [iijjkk]$, $[ijkijk] + [ijkikj] - 2 [ijijkk] \}$ $=$ $\{ \tr + 3 \x - 3 \chordth - \tri, 3 \chordtwo - 2 \thc - \tri, \tr + \h - 2 \chordtwo \}$.  
\end{example}
\begin{example}[$R_{00010} (2, 4)$]\label{ex1_relator34}
By using the arguments in Example~\ref{ex1_relator}, we can show: 
$
R_{00010}(2, 4)$ $
=$ $\{ \tr + 3 \x - 3 \chordth - \tri,$ $\triso + 3 \chordtwo - \chordthab - 2 \thc - \tri,$ 
$\ma + \tr + \h -\trii - 2 \chordtwo, 2 \htri - \crh - \chordtwoc,$
  $\ma + \hthc - \sh - 2 \triii,$  
  $\qua + 2 \tri - \ma - 2 \triso,$  $2 \ma + \sh - 2 \htri - \crh,$  $\htri + 2 \crh -  \hthc - 2 \trii,$ $3 \chordtwoc - 2 \chordtha - \chordthb,$ $\chordtha - 2 \four - \chordthab,$ $\chordthb - 2 \chordthab - \triFour,$ $\trisox + \chordtwx - \four - 2 \chordthab
 \}$. 
Details of the proof are left to the reader.  
\end{example}
\begin{example}[$R_{00001} (2, 3)$]\label{ex_00001}
By using the arguments in Example~\ref{ex1_relator}, we can show: 
\[
R_{00001}(2, 3) = \{ \h + \x - \chordth - \chordtwo, \tr - \h, 3 \h - \tr - 2 \chordtwo,
\chordtwo - 2 \thc + \tri, \chordtwo - \tri
 \}. 
\]
Details of the proof are left to the reader.  
\end{example}

\subsection{Main result and corollaries}\label{sec_main_result_1}
\begin{theorem}\label{gg_thm1}
Let $b$ and $d$ $(2 \le b \le d)$ be integers and $G_{\le d}$, $\{ x_i \}_{i \in \mathbb{N}}$, $\mathbb{Z}[G_{\le l}]$, $n_d$ $=$ $|G_{\le d}|$, and $G_{b, d}$ $=$ $\{ x_i \}_{n_{b-1} +1 \le i \le n_{d}}$ be as in Subsection~\ref{section_def1}.     
Let $\Fsum$ and $\aFsum$ be functions as in Definition~\ref{def_iv}.  
For $(\epsilon_1, \epsilon_2, \epsilon_3, \epsilon_4, \epsilon_5)$ $\in \{ 0, 1 \}^5$, let $\Rep(b, d)$ be as in Subsection~\ref{rr1}.  
Suppose the following conditions are satisfied: 

\noindent $\bullet$ If $\epsilon_1 =1$, $\displaystyle \aFsum(r)=0$ for each $r \in R_{10000}(b, d)$.  

\noindent $\bullet$ If $\epsilon_2 =1$, $\displaystyle \aFsum(r)=0$ for each $r \in R_{01000}(b, d)$.

\noindent $\bullet$ If $\epsilon_3 =1$, $\displaystyle \aFsum(r)=0$ for each $r \in R_{00100}(b, d)$.

\noindent $\bullet$ If $\epsilon_4 =1$, $\displaystyle \aFsum(r)=0$ for each $r \in R_{00010}(b, d)$.  

\noindent $\bullet$ If $\epsilon_5 =1$, $\displaystyle \aFsum(r)=0$ for each $r \in R_{00001}(b, d)$.   

Then, the function $\displaystyle \Fsum$ on the set of ambient isotopy classes of spherical/plane  curves is invariant under the Reidemeister moves corresponding to $\epsilon_j =1$.   
\end{theorem}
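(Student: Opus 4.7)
My plan is to generalize the Type~(\ii)/RI computation already displayed in the paragraphs between Definition~\ref{def_iv} and Definition~\ref{dfn_relator1} to each of the other four move types. Since the five hypotheses in the statement decouple completely, it suffices to prove the claim one move type at a time: fix a Reidemeister move, suppose $P$ and $P'$ are related by a single instance of it, and show $\Fsum(P) = \Fsum(P')$. To do this I would first pick Gauss words $G$ for $CD_P$ and $G'$ for $CD_{P'}$ in the standard form specified just before equation~(\ref{sub}), so that $G$ and $G'$ differ only on the ``new'' letters labelled $i$ (for RI), $i, j$ (for strong/weak RI\!I), or $i, j, k$ (for strong/weak RI\!I\!I).

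Using~(\ref{tilde_x}) and the decomposition~(\ref{subx}), I would rewrite
\[
x_i(P) - x_i(P') = \sum_{m=0}^{3} \Bigl( \sum_{z \in \sub^{(m)}_{x_i}(G)} \tilde{x}_i(z) - \sum_{z' \in \sub^{(m)}_{x_i}(G')} \tilde{x}_i(z') \Bigr).
\]
The sets $\sub^{(0)}(G)$ and $\sub^{(0)}(G')$ are canonically identified (both consist of sub-Gauss words of the common background $S$, $ST$, or $STU$), so the $m=0$ contributions cancel. For RI and both RI\!I moves, $G'$ contains none of the new letters, hence $\sub^{(m)}(G') = \emptyset$ for $m \ge 1$ and all surviving terms come from $\sub^{(\ge 1)}(G)$. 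For the two RI\!I\!I moves, the $m=1$ contributions from $G$ and $G'$ agree term by term, because a sub-Gauss word that keeps only one chord pair among $\{i, j, k\}$ reduces to the same $[S'iT'iU']$, $[S'jT'U'j]$, or $[S'T'kU'k]$ regardless of which triangular configuration it was read from; these also cancel.

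The central step is then to group what remains by the common background sub-Gauss word $z_0$ obtained from either $G$ or $G'$ by deleting every new letter. For each $z_0$, the ways of reinserting new letters to form a sub-Gauss word of $G$ or $G'$ are precisely the terms listed in Definition~\ref{def_relators}: for example, in strong~RI\!I with $G = SijTji$ and $z_0 = S'T'$, the extensions are $S'iT'i$, $S'jT'j$, $S'ijT'ji$, and the resulting contribution to the difference is $\tilde{x}_i$ applied to the Type~(\s) relator $[S'ijT'ji] + [S'iT'i] + [S'jT'j]$. The other cases follow the same pattern, producing for each $z_0$ a relator $r_{z_0}$ of Type~(\ii), (\w), (\sss), or (\www); in the RI\!I\!I cases it is exactly the $\sub^{(1)}$ cancellation noted above that lets the remaining $\sub^{(2)} \sqcup \sub^{(3)}$ contributions from $G$ and $G'$ combine into the signed eight-term expression recorded in Definition~\ref{def_relators}.

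Summing over $z_0$ I therefore obtain
\[
\Fsum(P) - \Fsum(P') = \sum_{z_0} \aFsum(r_{z_0}).
\]
Because each $r_{z_0}$ may involve more than $d$ chords, I would invoke Lemma~\ref{lem_relator} to replace $r_{z_0}$ by $O_{b,d}(r_{z_0}) \in \Rep(b, d)$ without changing $\aFsum(r_{z_0})$, after which the hypothesis of the theorem forces every summand to vanish. The main obstacle, and the step that really has to be checked by hand, is the case-by-case verification that the signed combination of sub-Gauss words indexed by a fixed $z_0$ reproduces exactly the expression in Definition~\ref{def_relators}; the two RI\!I\!I cases are the most delicate, since there one must simultaneously track the $\sub^{(1)}$ cancellation between $G$ and $G'$ and the relative positions at which $i, j, k$ are reinserted into $z_0$.
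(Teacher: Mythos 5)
Your proposal is correct and follows essentially the same route as the paper's proof: decompose $\sub(G)$ and $\sub(G')$ by the number of new letters via~(\ref{sub}), cancel the $\sub^{(0)}$ (and, for RI\!I\!I, the $\sub^{(1)}$) contributions through the canonical identification, group the surviving terms by the background sub-Gauss word $z_0$ so that each group is exactly a relator from Definition~\ref{def_relators}, and then kill each summand using the hypothesis together with the projection statement (the paper packages your appeal to Lemma~\ref{lem_relator} as Proposition~\ref{relator_prop}). The explicit reinsertion maps you describe are precisely the maps $z_1, z_1', z_2$ and $z_3, z_{2a}, z_{2b}, z_{2c}$ (and their primed analogues) written out in the paper's proof.
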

\begin{definition}[irreducible chord diagram]
Let $x$ be a chord diagram.  A chord $\alpha$ in $x$ is said to be an \emph{isolated chord} if $\alpha$ does not intersect any other chord.  If $x$ has an isolated chord, $x$ is called \emph{reducible} and otherwise, $x$ is called \emph{irreducible}.      
The set of the irreducible chord diagrams is denoted by $\irr$.  Let $\irri$ $=$ $\{ i ~|~ \ineq, x_i \in \irr \}$.
\end{definition}
If we consider the function of the form $\FsumIrr$ for $\Fsum$ in Theorem~\ref{gg_thm1}, we obtain:  
\begin{corollary}\label{g_thm1}
Let $b$ and $d$ $(2 \le b \le d)$ be integers and $G_{\le d}$, $\{ x_i \}_{i \in \mathbb{N}}$, $\mathbb{Z}[G_{\le l}]$, $n_d$ $=$ $|G_{\le d}|$, and $G_{b, d}$ $=$ $\{ x_i \}_{n_{b-1} +1 \le i \le n_{d}}$ be as in Subsection~\ref{section_def1}.     
Let $\FsumIrr$ and $\aFsumIrr$ be functions as in Definition~\ref{def_iv}.  
For $(\epsilon_1, \epsilon_2, \epsilon_3, \epsilon_4, \epsilon_5)$ $\in \{ 0, 1 \}^5$, let $\Rep(b, d)$ be as in Subsection~\ref{rr1}.  
Suppose the following conditions are satisfied: 

\noindent $\bullet$ If $\epsilon_2 =1$, $\displaystyle \aFsumIrr(r)=0$ for each $r \in R_{01000}(b, d)$.

\noindent $\bullet$ If $\epsilon_3 =1$, $\displaystyle \aFsumIrr(r)=0$ for each $r \in R_{00100}(b, d)$.

\noindent $\bullet$ If $\epsilon_4 =1$, $\displaystyle \aFsumIrr(r)=0$ for each $r \in R_{00010}(b, d)$.

\noindent $\bullet$ If $\epsilon_5 =1$, $\displaystyle \aFsumIrr(r)=0$ for each $r \in R_{00001}(b, d)$.   

Then, the function
$\displaystyle \FsumIrr$
 on the set of ambient isotopy classes of spherical/plane curves is invariant under RI and the Reidemeister moves corresponding to $\epsilon_j =1$.    
\end{corollary}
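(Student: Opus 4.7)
The plan is to derive this corollary directly from Theorem~\ref{gg_thm1} by showing that, after restricting the sum to indices in $\irri$, the hypothesis for Type~(\ii) relators holds automatically. Once that is in hand, I can invoke Theorem~\ref{gg_thm1} with $\epsilon_1 = 1$ together with whichever other $\epsilon_j$'s are activated by the hypotheses of the corollary.

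The core observation concerns the shape of a Type~(\ii) relator $r = [Sii]$, with $i$ not occurring in the Gauss word $S$. In any representative of this equivalence class, the two endpoints labeled $i$ are cyclically adjacent, and since $i$ does not appear in $S$, no other chord has an endpoint lying strictly between them on the circle. Thus the $i$-chord is isolated and $[Sii]$ is reducible. This persists after the projection $O_{b,d}$: the image $O_{b,d}([Sii])$ is either $0$ (when the length of $[Sii]$ lies outside $[b,d]$) or the same reducible element $[Sii]$. By Definition~\ref{tilde_l}, if $x_i$ is irreducible then $\tilde{x}_i$ vanishes on any reducible chord diagram, so $\tilde{x}_i(r) = 0$ for every $i \in \irri$ and every $r \in R_{10000}(b,d)$. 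Summing over $\irri$ gives $\aFsumIrr(r) = 0$ for all $r \in R_{10000}(b,d)$.

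I then apply Theorem~\ref{gg_thm1} to the element $\FsumIrr$, viewed inside $\mathbb{Z}[G_{b,d}]$ by setting $\alpha_j = 0$ for $j \notin \irri$, taking $\epsilon_1 = 1$ and $\epsilon_j = 1$ precisely for those $j \in \{2,3,4,5\}$ whose vanishing assumption appears in the statement. The $\epsilon_1$-hypothesis is supplied by the preceding paragraph, and the remaining hypotheses are exactly those imposed in the corollary. The conclusion of Theorem~\ref{gg_thm1} then yields invariance of $\FsumIrr$ under RI and under the Reidemeister moves labeled by the active $\epsilon_j$'s, which is exactly the claim.

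I do not expect a substantive obstacle; essentially the only point that warrants care is checking that ``the $i$-chord is isolated'' is genuinely an invariant of the equivalence class $[Sii]$, i.e., is preserved by the cyclic and reflection relations $\cyc$ and $\rev$ of Definition~\ref{d1}. Since adjacency of the two $i$-endpoints on the circle is preserved by both $\cyc$ and $\rev$, isolation is well-defined on the isomorphism class, and the reduction to Theorem~\ref{gg_thm1} goes through without further subtlety.
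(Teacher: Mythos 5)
Your proposal is correct and follows essentially the same route as the paper's own proof: you verify that the Type~(\ii) hypothesis of Theorem~\ref{gg_thm1} holds automatically because the distinguished chord of a Type~(\ii) relator $[Sii]$ is isolated while irreducible chord diagrams have no isolated chords, hence $\tilde{x}_i(r)=0$ for all $i\in\irri$, and then you apply Theorem~\ref{gg_thm1} to $\FsumIrr$ regarded as a sum over all of $G_{b,d}$ with the remaining coefficients set to zero. The extra check that isolation is preserved under $\cyc$ and $\rev$ is a harmless (and valid) addition not spelled out in the paper.
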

\noindent{\bf{Proof of Corollary~\ref{g_thm1} from Theorem~\ref{gg_thm1}.}}  
By Theorem~\ref{gg_thm1}, it is enough to show
\begin{equation}\label{eq2}
\aFsumIrr (r)=0  \quad (\forall r \in R_{10000} (b, d))
\end{equation}
for a proof of Corollary~\ref{g_thm1}.  We first note that if $x_i \in \irr$, then $x_i$ has no isolated chords.  On the other hand, let $r \in R_{10000}(b, d)$, that is, there exist a Gauss word $S$ and a letter $j$ such that $r=[Sjj]$.  Then, the chord corresponding to $j$ is isolated.  
These show that $\tilde{x}_i (r)=0$.  This shows that (\ref{eq2}) holds.         

Note that $\FsumIrr$ is a special case of $\Njsum \alpha_j x_j$.  Thus, we may suppose that integers $\alpha_j$ ($n_{b-1} + 1 \le j \le n_{d}$) satisfy that $\FsumIrr$ $=$  $\Njsum \alpha_j x_j$.  Therefore, if $\aFsumIrr (r)$ $=0$, then $\Njsum \alpha_j \tilde{x}_j (r)$ $=0$.  By Theorem~\ref{gg_thm1}, $\sum_{n_{b-1} + 1 \le j \le n_{d}} \alpha_j x_j$ is invariant.  Since each non-zero term is common, $\FsumIrr$ is also invariant.  
$\hfill \Box$ 
\begin{example}\label{ex_00010}
It is easy to see that $\{ x_i \}_{i \in \irritwo}$ consists of three elements, $y_1$ $=$ $\x$, $y_2$ $=$ $\tr$, and $y_3$ $=$ $\h$.  This fact together with Corollary~\ref{g_thm1} implies that if there exist integers $\alpha_1$, $\alpha_2$, and $\alpha_3$ such that $(\alpha_1 \tilde{y}_1$ $+$ $\alpha_2 \tilde{y}_2$ $+$ $\alpha_3 \tilde{y}_3)(r)$ $=$ $0$ ($\forall r \in {R}_{00010}(2, 3)$), then $\alpha_1 y_1$ $+$ $\alpha_2 y_2$ $+$ $\alpha_3 y_3$ is invariant under RI and strong RI\!I\!I. 
Recall that $R_{00010}(2, 3)$ consists of the following three elements $r_1$, $r_2$, and $r_3$ (Example~\ref{ex1_relator}):  
\begin{align*}
r_1 &= \tr + 3 \x - 3 \chordth - \tri, \\
r_2 &= 3 \chordtwo - 2 \thc - \tri,~{\text{and}}~ \\
r_3 &= \tr + \h - 2 \chordtwo.  
\end{align*}
\begin{figure}[b]
\includegraphics[width=12cm]{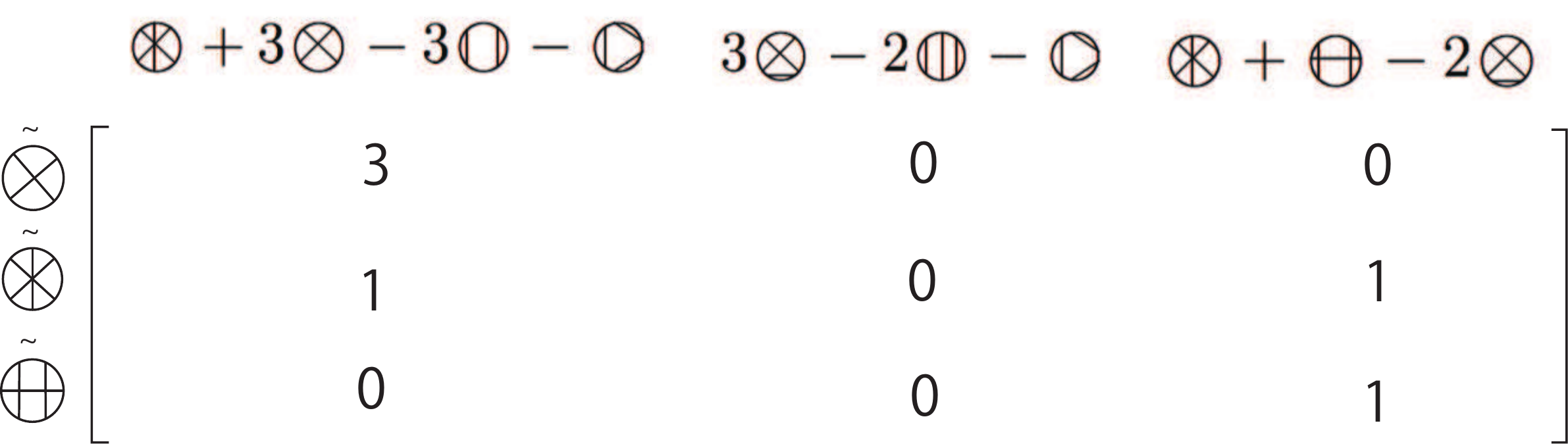}
\caption{Matrix $(\tilde{y}_i (r_j))_{1 \le i, j \le 3}.$}\label{matrix1b}
\end{figure}
We can find such $\alpha_1$, $\alpha_2$, and $\alpha_3$ by solving the linear equation ${\bf{x}} M$ $=$ $\bf{0}$, where $M$ $=$ $(\tilde{y}_i (r_j))$ (Figure~\ref{matrix1b}).  It is elementary to show that the set of solutions is $\{ {\bf{x}}$ $=$ $\gamma (1, -3, 3)$ $|~\gamma \in \mathbb{Z}\}$.  
Particularly, $y_1$ $-$ $3 y_2$ $+$ $3 y_3$ is invariant under RI and strong RI\!I\!I.  We note that $\frac{1}{4} (y_1$ $-$ $3 y_2$ $+$ $3 y_3)$ is the function $\lambda$ introduced in \cite{IT3} from a different view point.           
\end{example}
\begin{example}
In contrast to Example~\ref{ex_00010}, we will note that there does not exist a nonzero $(\alpha_1, \alpha_2, \alpha_3)$ such that $(\alpha_1 \tilde{y}_1$ $+$ $\alpha_2 \tilde{y}_2$ $+$ $\alpha_3 \tilde{y}_3)(r)$ $=$ $0$ for each $r \in R_{00001}(2, 3)$, i.e., there does not exist $\alpha_1 \tilde{y_1}$ $+$ $\alpha_2 \tilde{y}_2$ $+$ $\alpha_3 \tilde{y}_3$ satisfying the condition of Corollary~\ref{g_thm1} with $\epsilon_5 =1$.    
In fact, we take $y_i (i = 1, 2, 3)$ as in Example~\ref{ex_00010}, and name the elements of $R_{00001}(2, 3)$ (see Example~\ref{ex_00001}) as: 
\begin{align*}
r'_1 &= \h + \x - \chordth - \chordtwo, \\
r'_2 &= \tr - \h, \\
r'_3 &= 3 \h - \tr - 2 \chordtwo, \\
r'_4 &= \chordtwo - 2 \thc + \tri,~{\text{and}}~\\
r'_5 &= \chordtwo - \tri.\\
\end{align*}
Let $M$ $=$ $(\tilde{y}_i (r'_j))_{1 \le i \le 3, 1 \le j \le 5}$ (Figure~\ref{matrix3}).  
Then it is easy to see that the linear equation  ${\bf{x}} M$ $=$ $\bf{0}$ admits the trivial solution ${\bf{x}}$ $=$ $(0, 0, 0)$ only.         
\begin{figure}[h!]
\includegraphics[width=12cm]{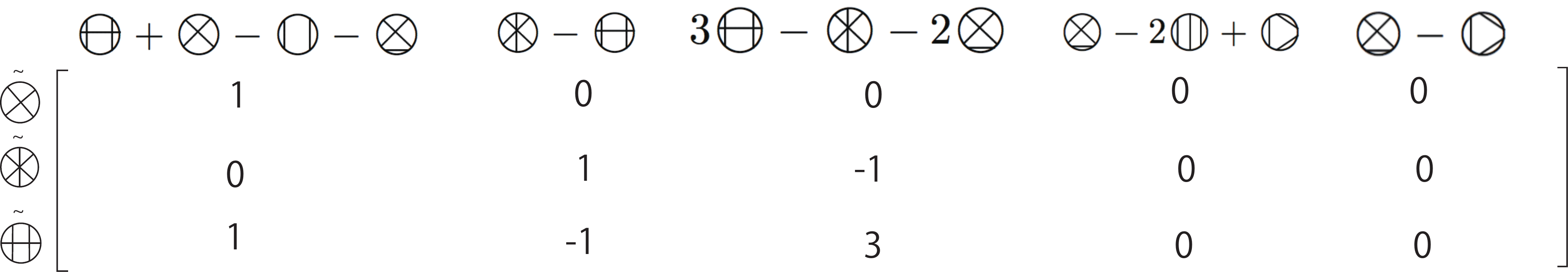}
\caption{Matrix $(\tilde{y}_i (r'_j))_{1 \le i \le 3, 1 \le j \le 5}.$}\label{matrix3}
\end{figure}  
\end{example}
\begin{definition}[connected chord diagram]
Let $v$ be a Gauss word of length $2m$ and $w$ a Gauss word of length $2n$ where $v(\hat{2m})$ $\cap$ $w(\hat{2n})$ $=$ $\emptyset$.    
Then we define the Gauss word of length $2(m+n)$, denoted by $vw$, by $vw(i)=v(i)$ ($1 \le i \le 2m$) and $vw(2m+i)=w(i)$ ($1 \le i \le 2n$).  The chord diagram $[vw]$ is called a {\it{product}} of chord diagrams $[v]$ and $[w]$.  If a chord diagram is not a product of two non-empty chord diagrams, then the chord diagram is called a {\it{connected}} chord diagram.  The set of the connected chord diagrams is denoted by $\conn$.  Let $\conni$ $=$ $\{ i ~|~ \ineq, x_i \in \conn \}$.  
\end{definition}
It is easy to see that if a connected chord diagram has at least two chords, it is irreducible.  
Note that the chord diagram consisting of exactly one chord is connected but not irreducible.  

Recall that $\mathcal{C}$ is the set of ambient isotopy classes of spherical/plane curves.   In Definition~\ref{additivity_def}, for plane curves, we freely use the identification of $S^2$ with $\mathbb{R}^2 \cup \{ \infty \}$.   
\begin{definition}[connected sum]\label{additivity_def}
Let $P_1$, $P_2$ $\in \mathcal{C}$.  We suppose that the ambient $2$-spheres are oriented.  
Let $p_i$ be a point on $P_i$ such that $p_i$ is not a double point ($i=1, 2$).  Let $d_i$ be a sufficiently small disk with center $p_i$ ($i=1, 2$) such that $d_i \cap P_i$ consists of an arc properly embedded in $d_i$.  Let $\hat{d}_i$ $=$ $cl(S^2 \setminus d_i)$ and $\hat{P}_i$ $=$ $P_i \cap \hat{d}_i$; let $h : \partial \hat{d}_1$ $\to$ $\partial \hat{d}_2$ be an orientation reversing homeomorphism such that $h(\partial \hat{P}_1)$ $=$ $\partial \hat{P}_2$.  Then, $\hat{P}_1 \cup_h \hat{P}_2$ gives a spherical curve in the oriented $2$-sphere $\hat{d}_1 \cup_h \hat{d}_2$.  The spherical curve $\hat{P}_1 \cup_h \hat{P}_2$ in the oriented $2$-sphere, denoted by $P_1 \sharp_{(p_1,~p_2), h} P_2$, is called a {\it{connected sum}} of the spherical/plane curves $P_1$ and $P_2$ at the pair of points $p_1$ and $p_2$ (see Figure~\ref{connect}).   
\begin{figure}[h!]
\includegraphics[width=10cm]{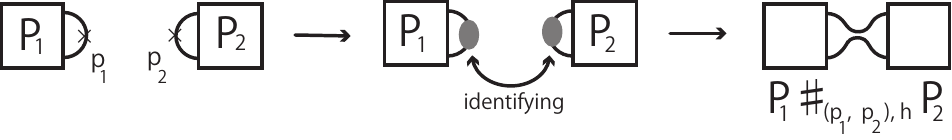}
\caption{Connected sum $P_1 \sharp_{(p_1,~p_2), h} P_2$ of two spherical/plane curves $P_1$ and $P_2$.}\label{connect}
\end{figure}
\end{definition}
\begin{definition}[additivity]   
Let $I$ be a $\mathbb{Z}$-valued function on $\mathcal{C}$.  We say that $I$ is {\it{additive}} if $I(P_1 \sharp_{(p_1, p_2), h} P_2)$ $=$ $I(P_1) + I(P_2)$ for any spherical/plane curves $P_1$ and $P_2$ and for any $P_1 \sharp_{(p_1, p_2), h} P_2$ in Definition~\ref{additivity_def}.  
\end{definition}
\begin{definition}[prime]
A spherical/plane curve $P$ is \emph{trivial} if $P$ is a simple closed curve.  
A spherical/plane curve $P$ is {\it{prime}} if $P$ cannot be expressed as a connected sum of two nontrivial spherical/plane curves.  
\end{definition}

If we consider the function of the form $\FsumConn$ for $\Fsum$ in Theorem~\ref{gg_thm1}, we obtain:   
\begin{corollary}\label{cor3}
Let $b$ and $d$ $(2 \le b \le d)$ be integers and $G_{\le d}$, $\{ x_i \}_{i \in \mathbb{N}}$, $\mathbb{Z}[G_{\le l}]$, $n_d$ $=$ $|G_{\le d}|$, and $G_{b, d}$ $=$ $\{ x_i \}_{n_{b-1} +1 \le i \le n_{d}}$ be as in Subsection~\ref{section_def1}.     
Let $\FsumConn$ and $\aFsumConn$ be functions as in Definition~\ref{def_iv}.  
For $(\epsilon_1, \epsilon_2, \epsilon_3, \epsilon_4, \epsilon_5)$ $\in \{ 0, 1 \}^5$, let $\Rep(b, d)$ be as in Subsection~\ref{rr1}.  
Suppose the following conditions are satisfied:

\noindent $\bullet$ If $\epsilon_2 =1$, $\displaystyle \aFsumConn(r)=0$ for each $r \in R_{01000}(b, d)$.

\noindent $\bullet$ If $\epsilon_3 =1$, $\displaystyle \aFsumConn(r)=0$ for each $r \in R_{00100}(b, d)$.

\noindent $\bullet$ If $\epsilon_4 =1$, $\displaystyle \aFsumConn(r)=0$ for each $r \in R_{00010}(b, d)$.

\noindent $\bullet$ If $\epsilon_5 =1$, $\displaystyle \aFsumConn(r)=0$ for each $r \in R_{00001}(b, d)$.   

Then, the function
$\displaystyle \FsumConn$
 on the set of ambient isotopy classes of spherical/plane curves is additive and invariant under RI and the Reidemeister moves corresponding to $\epsilon_j =1$.      
\end{corollary}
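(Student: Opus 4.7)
I propose to prove Corollary~\ref{cor3} by separating it into two independent parts: the invariance assertions, which I would derive from Corollary~\ref{g_thm1}, and the additivity assertion, which requires a separate argument based on how sub-chord diagrams of a Gauss-word product decompose.

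For the invariance, since $b \ge 2$ every $x_i$ with $i \in \conni$ has at least two chords, and the remark following the definition of a connected chord diagram tells us that any such diagram is irreducible; hence $\conni \subseteq \irri$. I would then extend the coefficients by setting $\alpha_i = 0$ for $i \in \irri \setminus \conni$, so that with the extended family both $\FsumConn$ and $\aFsumConn$ literally agree with $\FsumIrr$ and $\aFsumIrr$. The four hypotheses of Corollary~\ref{cor3} thereby become the corresponding hypotheses of Corollary~\ref{g_thm1}, which supplies invariance under RI and under the Reidemeister moves flagged by $\epsilon_j = 1$.

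For the additivity, the first step I would carry out is identifying $CD_{P_1 \sharp_{(p_1,p_2),h} P_2}$ with the product $[G_1 G_2]$, where $G_1, G_2$ are Gauss words on disjoint alphabets representing $CD_{P_1}$ and $CD_{P_2}$. After cyclically shifting the basepoints of $P_1, P_2$ to lie just before $p_1, p_2$, the traversal of the connected sum curve records the double points of $P_1$, crosses via $h$ into $P_2$, records those of $P_2$, and then closes up; modulo the cyclic and reflective equivalences of Definition~\ref{d1} this is exactly the concatenation $G_1 G_2$. Once this identification is in place, any sub-Gauss word of $G_1 G_2$ has the form $G_1' G_2'$ with $G_j'$ a sub-Gauss word of $G_j$, because both occurrences of each letter lie in the same factor and must be retained or removed together. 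Hence $[G_1' G_2']$ is a product of the chord diagrams $[G_1']$ and $[G_2']$, and since $x_i$ is connected with at least two chords, the equation $[G_1' G_2'] = x_i$ forces exactly one of $G_1', G_2'$ to be empty. This gives $\sub_{x_i}(G_1 G_2) = \sub_{x_i}(G_1) \sqcup \sub_{x_i}(G_2)$, hence $x_i(P_1 \sharp P_2) = x_i(P_1) + x_i(P_2)$, and taking the $\alpha_i$-linear combination yields the additivity of $\FsumConn$.

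The main obstacle I anticipate lies in the opening step of the additivity argument, namely the clean verification that the chord diagram of the connected sum is the concatenation $G_1 G_2$. Figure~\ref{connect} makes this geometrically compelling, but a rigorous proof requires careful bookkeeping of the orientation-reversing homeomorphism $h$, the removed disks $d_i$ around $p_i$, and the cyclic and reflective ambiguity inherent in Definition~\ref{d1}. Once that identification is secured, the remaining combinatorial point---that a connected chord diagram cannot appear as a non-trivial product of sub-Gauss words---is routine.
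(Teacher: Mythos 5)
Your proposal is correct and follows essentially the same route as the paper: the paper likewise disposes of the Type~(\ii) relators via the observation that connected diagrams with at least two chords are irreducible and hence contain no isolated chord (it invokes Theorem~\ref{gg_thm1} directly rather than passing through Corollary~\ref{g_thm1}, but the underlying argument is identical), and it asserts additivity from the same decomposition $\sub_{x_i}(G_1G_2)=\sub_{x_i}(G_1)\sqcup\sub_{x_i}(G_2)$ that you spell out, merely calling it ``clear'' from geometric observations. Your more explicit treatment of the identification $CD_{P_1\sharp P_2}=[G_1G_2]$ and of why a connected diagram cannot split across the two factors fills in details the paper leaves to the reader, but introduces no new method.
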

\noindent{\bf{Proof of Corollary~\ref{cor3} from Theorem~\ref{gg_thm1}.}}
Since $b \ge 2$ and $i \ge n_{b-1} + 1$, each $x_i$ consists of more than one chords, i.e., $x_i \in \irr$ (see the note preceding Definition~\ref{additivity_def}).  

By Theorem~\ref{gg_thm1}, it is enough to show
\begin{equation}\label{eq3}
\aFsumConn (r)=0  \quad (\forall r \in R_{10000} (b, d))
\end{equation}
for a proof of Corollary~\ref{cor3}.  We first note that if $x_i \in \irr$, then $x_i$ has no isolated chords.  On the other hand, let $r \in R_{10000}(b, d)$, that is, there exist a Gauss word $S$ and a letter $j$ such that $r=[Sjj]$.  Then, the chord corresponding to $j$ is isolated.  
These show that $\tilde{x}_i (r)=0$.  This shows that (\ref{eq3}) holds.  

Note that $\FsumConn$ is a special case of $\Njsum \alpha_j x_j$.  Thus, we may suppose that integers $\alpha_j $ ($n_{b-1} + 1 \le j \le n_{d}$) satisfy that $\FsumConn$ $=$  $\Njsum \alpha_j x_j$.  Therefore, if $\aFsumConn (r)$ $=0$, then $\Njsum \alpha_j \tilde{x}_j (r)$ $=0$.  By Theorem~\ref{gg_thm1}, $\sum_{n_{b-1} + 1 \le j \le n_{d}} \alpha_j x_j$ is invariant.  Since each non-zero term is common, $\FsumConn$ is also invariant.

Further, by using geometric observations as in Example~\ref{example1} it is clear that if $x_i \in \conn$, then \begin{equation}\label{eq4a}
{x_i}(P_1 \sharp_{(p_1,~p_2), h} P_2) = {x_i}(P_1) + {x_i}(P_2)
\end{equation}
for any $P_1 \sharp_{(p_1, p_2), h} P_2$.  This fact implies $\FsumConn$ is additive.  
$\hfill\Box$

\section{Proof of Theorem~\ref{gg_thm1}}\label{proof_main_thms}
Since every Reidemeister move on plane curves may be regarded as that of spherical curves, we will prove the statement for  Reidemeister moves on spherical curves in the following.    

$\bullet$ (Proof of the case $\epsilon_1 = 1$.)
Let $P$ and $P'$ be two spherical  curves where $P$ and $P'$ are related by a single RI, hence, there exist a letter $i$ and two Gauss words $G=Sii$ and $G'=S$ corresponding to $P$ and $P'$, respectively, i.e., $CD_P$ $=$ $[Sii]$ and $CD_{P'}$ $=$ $[S]$.
As we observed in a paragraph preceding of Definition~\ref{dfn_relator1} in  Subsection~\ref{rr1}, we have
\[\Fsum(P) -\Fsum(P')
= \sum_{n_{b-1} + 1 \le i \le n_d} \sum_{z_0 \in \sub^{(0)}(G)} \alpha_i \tilde{x}_i \left( [z_0 ii] \right).
\]
By the assumption of this case, for each $z_0 \in \sub^{(0)}(G)$, 
\[
\sum_{n_{b-1} + 1 \le i \le n_d} \alpha_i \tilde{x}_i \left( [z_0 ii] \right)=0
\]
and this shows that 
\[
\Fsum(P) = \Fsum(P').  
\]
Hence, $\Fsum$ is invariant  under RI.  

$\bullet$ (Proof of the case $\epsilon_2 =1$.)  Let $P$ and $P'$ be two spherical curves where $P$ and $P'$ are related by a single strong~RI\!I, hence, there exist two Gauss words $G=SijTji$ and $G'=ST$ corresponding to $P$ and $P'$, respectively, i.e., $CD_P$ $=$ $[SijTji]$ and $CD_{P'}$ $=$ $[ST]$. 

By (\ref{tilde_x}) in Subsection~\ref{section_def1} and (\ref{sub}) in Subsection~\ref{rr1}, we obtain (note that $\sub^{(3)} (G)$ $=$ $\emptyset$): 
  
\begin{align*}
&\Fsum (P) = \sFsum \left(\sum_{z \in \sub(G)} \tilde{x}_i (z) \right) \\
&= \sFsum \left(  \sum_{z_0 \in \sub^{(0)}(G)} \tilde{x}_i (z_0) \right)
+ \Nsum \sum_{z_{12} \in \sub^{(1)}(G) \cup \sub^{(2)} (G)} \alpha_i \tilde{x}_i (z_{12})\\
&= \sFsum \left(  \sum_{z' \in \sub(G')} \tilde{x}_i (z') \right)
+ \Nsum \sum_{z_{12} \in \sub^{(1)}(G) \cup \sub^{(2)} (G)} \alpha_i \tilde{x}_i (z_{12})
\end{align*}
($\because$ $\sub^{(0)}(G)$ is identified with $\sub(G')$).  

Let $z_0 \in \sub^{(0)}(G)$.
We note that since $G$ is a Gauss word $z_0$ uniquely admits a decomposition into two sub-words, which are sub-words on $S$ and $T$. 
Let $\sigma(z_0)$ be the sub-word of $S$ and $\tau(z_0)$ the sub-word of $T$ satisfying $z_0$ $=$ $\sigma(z_0)\tau(z_0)$.  Under these notations, we define maps
\begin{align*}
z_2: &\sub^{(0)}(G) \to \sub^{(2)}(G); z_2 (z_0) = \sigma(z_0)ij\tau(z_0)ji,\\
z_1: &\sub^{(0)}(G) \to \sub^{(1)}(G); z_1 (z_0) = \sigma(z_0)i\tau(z_0)i,~{\textrm{and}}~\\
z'_1: &\sub^{(0)}(G) \to \sub^{(1)}(G); z'_1 (z_0) = \sigma(z_0)j\tau(z_0)j.  
\end{align*}
Then, it is easy to see that $\sub^{(1)} (G) \cup \sub^{(2)} (G)$ admits a decomposition 
\begin{align*}
&\sub^{(1)} (G) \cup \sub^{(2)} (G) \\
&=  \{ z_1 (z_0) ~|~ z_0 \in \sub^{(0)} (G) \} \amalg \{ z'_1 (z_0) ~|~ z_0 \in \sub^{(0)} (G) \} \amalg \{ z_2 (z_0) ~|~ z_0 \in \sub^{(0)}(G) \}.  
\end{align*}
These notations together with the above give: 
\begin{align*}
\Fsum(P) &= \Fsum (P') \\
&\qquad + \Nsum \sum_{z_0 \in \sub^{(0)}(G)} \at (z_1 (z_0) + z'_1 (z_0) + z_2 (z_0))\\
&= \Fsum (P') \\
&\qquad + \Nsum \sum_{z_0 \in \sub^{(0)}(G)} \at ([z_1 (z_0)] + [z'_1 (z_0)] + [z_2 (z_0)]).
\end{align*}
By Proposition~\ref{relator_prop} and by the condition for the case $\epsilon_2$ $=$ $1$, 
for any $z_0 \in \sub^{(0)}(G)$, 
\[
\aFsum ([z_1 (z_0)] + [z'_1 (z_0)] + [z_2 (z_0)]) = 0.
\]  

Thus,  
\begin{align*}
\Fsum (P) = \Fsum (P').
\end{align*}
Hence, $\Fsum$ is invariant  under strong~RI\!I.
%%%%%%%%

$\bullet$ (Proof of the case $\epsilon_3 =1$.)  
Since the arguments are essentially the same as those for the case $\epsilon_2$ $=$ $1$, we omit this proof.  

%sRIII%%%%%%%
$\bullet$ (Proof of the case $\epsilon_4 =1$.)  Let $P$ and $P'$ be two spherical curves where $P$ and $P'$ are related by a single strong~RI\!I\!I, hence, there exist two Gauss words $G=SijTkiUjk$ and $G'=SjiTikUkj$ corresponding to $P$ and $P'$, respectively, i.e., $CD_P$ $=$ $[SijTkiUjk]$ and $CD_{P'}$ $=$ $[SjiTikUkj]$.   

By (\ref{tilde_x}) in Subsection~\ref{section_def1} and (\ref{sub}) in Subsection~\ref{rr1}, we obtain: 
\begin{align*}
&\Fsum (P) = \sFsum \left( \sum_{z \in \sub(G)} \tilde{x}_i (z) \right)  \\
&= \sFsum \left( \sum_{z_{01} \in \sub^{(0)}(G) \cup \sub^{(1)}(G)} \tilde{x}_i (z_{01}) 
+ \sum_{z_{23} \in \sub^{(2)}(G) \cup \sub^{(3)}(G)} \tilde{x}_i (z_{23})
 \right).  
\end{align*}
and
\begin{align*}
&\Fsum (P') = \sFsum \left( \sum_{z' \in \sub(G')} \tilde{x}_i (z') \right)  \\
&= \sFsum \left( \sum_{z'_{01} \in \sub^{(0)}(G') \cup \sub^{(1)}(G')} \tilde{x}_i (z'_{01}) 
+ \sum_{z'_{23} \in \sub^{(2)}(G') \cup \sub^{(3)}(G')} \tilde{x}_i (z'_{23})
 \right).
\end{align*}

Since $\sub^{(0)}(G)$ ($\sub^{(1)}(G)$ resp.) is naturally identified with $\sub^{(0)}(G')$ ($\sub^{(1)}(G')$ resp.), the above equations show: 
\begin{align*}
&\Fsum (P) - \Fsum (P') \\
&= \Nsum \sum_{z_{23} \in \sub^{(2)}(G) \cup \sub^{(3)} (G)} \at (z_{23})\\
&\qquad - \Nsum \sum_{z'_{23} \in \sub^{(2)}(G') \cup \sub^{(3)} (G')} \at (z'_{23}).   
\end{align*}

Let $z_0 \in \sub^{(0)}(G)$, which is identified with $\sub^{(0)}(G')$.  
We note that since $G$ is a Gauss word $z_0$ uniquely admits a decomposition into three sub-words, which are sub-words on $S$, $T$, and $U$. 
Let $\sigma(z_0)$ be the sub-word of $S$, $\tau(z_0)$ the sub-word of $T$, and $\mu(z_0)$ the sub-word of $U$ satisfying $z_0$ $=$ $\sigma(z_0)\tau(z_0)\mu(z_0)$.  
We define maps   
\begin{align*}
z_3: &\sub^{(0)}(G) \to \sub^{(3)}(G); z_3 (z_0) = \sigma(z_0) ij \tau(z_0) ki \mu(z_0) jk,\\ 
z_{2a}: &\sub^{(0)}(G) \to \sub^{(2)}(G); z_{2a} (z_0) = \sigma(z_0) ij \tau(z_0) i \mu(z_0) j, \\
z_{2b}: &\sub^{(0)}(G) \to \sub^{(2)}(G); z_{2b} (z_0) = \sigma(z_0) i \tau(z_0) ki \mu(z_0) k,~{\text{and}}~\\
z_{2c}: &\sub^{(0)}(G) \to \sub^{(2)}(G); z_{2c} (z_0) = \sigma(z_0) j \tau(z_0) k\mu(z_0) jk.
\end{align*}
Similarly, let 
\begin{align*}
z'_3: &\sub^{(0)}(G') \to \sub^{(3)}(G'); z'_3 (z_0) = \sigma(z_0) ji \tau(z_0) ik \mu(z_0) kj,\\
z'_{2a}: &\sub^{(0)}(G') \to \sub^{(2)}(G'); z'_{2a} (z_0) = \sigma(z_0) ji \tau(z_0) i \mu(z_0) j, \\
z'_{2b}: &\sub^{(0)}(G') \to \sub^{(2)}(G'); z'_{2b} (z_0) = \sigma(z_0) i \tau(z_0) ik \mu(z_0) k,~{\text{and}}~ \\
z'_{2c}: &\sub^{(0)}(G') \to \sub^{(2)}(G'); z'_{2c} (z_0) = \sigma(z_0) j \tau(z_0) k \mu(z_0) kj.
\end{align*}
Then, it is easy to see that $\sub^{(2)} (G) \cup \sub^{(3)} (G)$ admits decompositions 
\begin{align*}
&\sub^{(2)} (G) \cup \sub^{(3)} (G) \\
&= \{ z_3 (z_0) ~|~ z_0 \in \sub^{(0)}(G) \}
\amalg \{ z_{2a} (z_0) ~|~ z_0 \in \sub^{(0)} (G) \} 
\amalg \{ z_{2b} (z_0) ~|~ z_0 \in \sub^{(0)} (G) \}  \\
& \amalg \{ z_{2c} (z_0) ~|~ z_0 \in \sub^{(0)}(G) \}
\end{align*}
and 
\begin{align*}
& \sub^{(2)}(G') \cup \sub^{(3)} (G') \\
&= \{ z'_3 (z_0) ~|~ z_0 \in \sub^{(0)}(G) \}
\amalg \{ z'_{2a} (z_0) ~|~ z_0 \in \sub^{(0)} (G) \} 
 \amalg \{ z'_{2b} (z_0) ~|~ z_0 \in \sub^{(0)} (G) \} \\
&\amalg \{ z'_{2c} (z_0) ~|~ z_0 \in \sub^{(0)} (G) \}.
\end{align*}
By using these notations, we obtain: 
\begin{align*}
&\Fsum (P) - \Fsum (P')
\\
&= \sum_{z_0 \in \sub^{(0)}(G)}  \aFsum \Big( (z_3 (z_0) + z_{2a} (z_0) + z_{2b} (z_0) + z_{2c} (z_0))\\ 
&- (z'_3(z_0) + z'_{2a}(z_0) + z'_{2b}(z_0) + z'_{2c}(z_0)) \Big) \\
&= \sum_{z_0 \in \sub^{(0)}(G)} \aFsum \Big(  ([z_3 (z_0)] + [z_{2a} (z_0)] + [z_{2b} (z_0)] + [z_{2c} (z_0)])\\ 
& - ([z'_3(z_0)] + [z'_{2a}(z_0)] + [z'_{2b}(z_0)] + [z'_{2c}(z_0)]) \Big).
\end{align*}
Here, we note that 
\[
([z_3 (z_0)] + [z_{2a} (z_0)] + [z_{2b} (z_0)] + [z_{2c} (z_0)]) - ([z'_3(z_0)] + [z'_{2a}(z_0)] + [z'_{2b}(z_0)] + [z'_{2c}(z_0)]) \in R_{00010}.  
\]
Hence, by the assumption of Case $\epsilon_4$ $=$ $1$ and by Proposition~\ref{relator_prop}, for each $z_0$, 
\begin{align*}
&\aFsum  \Big(  ([z_3 (z_0)] + [z_{2a} (z_0)] + [z_{2b} (z_0)] + [z_{2c} (z_0)]) \\
&\qquad\qquad\qquad  - ([z'_3(z_0)] + [z'_{2a}(z_0)] + [z'_{2b}(z_0)] + [z'_{2c}(z_0)]) \Big)\\
&\qquad = 0.  
\end{align*}
These show that  
\[ \Fsum (P) = \Fsum (P').
\]
Hence, $\Fsum$ is invariant  under strong~RI\!I\!I.
%%%%%%%%

$\bullet$ (Proof of the case $\epsilon_5 =1$.)  
Since the arguments are essentially the same as those for the case $\epsilon_4$ $=$ $1$, we omit this proof.

$\hfill\Box$

\section{Complete analysis of invariants obtained from ${G}_{2, 4}$ and ${R}_{00010} (2, 4)$}\label{sec_appl}
In this section, by using Corollary~\ref{cor3}, we completely analyze invariants obtained from ${G}_{2, 4} \cap \conn$ and ${R}_{00010} (2, 4)$.  First, it is easy to see that $\{ x_i \}_{i \in I^{(\conn)}_{2, 4}}$ consists of nine elements, 
$y_1$ $=$ $\x$, 
$y_2$ $=$ $\tr$, 
$y_3$ $=$ $\h$, 
$y_4$ $=$ $\sh$,  
$y_5$ $=$ $\hthc$,
$y_6$ $=$ $\htri$,
$y_7$ $=$ $\qua$,  
$y_8$ $=$ $\crh$, and
$y_9$ $=$ $\ma$.  
This fact together with Corollary~\ref{cor3} implies that if there exist integers $\alpha_i$ ($1 \le i \le 9$) such that $(\sum_{i=1}^9 \alpha_i \tilde{y}_i)(r)$ $=$ $0$ ($\forall r \in {R}_{00010}(2, 4)$), then $\sum_{i=1}^9 \alpha_i y_i$ is invariant under RI and strong RI\!I\!I. 
Second, recall that $R_{00010}(2, 4)$ consists of twelve elements $r_j$ ($1 \le j \le 12$) as follows (see Example~\ref{ex1_relator34}):  
\begin{align*}
r_1 &= \tr + 3 \x - 3 \chordth - \tri, \\
r_2 &= \triso + 3 \chordtwo - \chordthab - 2 \thc - \tri, \\
r_3 &= \ma + \tr + \h -\trii - 2 \chordtwo, \\
%r_4 &= \tr + 3 \x - \chordth - 3 \tri, \\
r_4 &= 2 \htri - \crh - \chordtwoc, \\
r_5 &= \ma + 2 \hthc - \sh - 2 \triii, \\
r_6 &= \qua + 2 \htri - \ma - 2 \triso, \\ 
r_7 &= 2 \ma + \sh - 2 \htri - \crh, \\
r_{8} &= \htri + 2 \crh -  \hthc - 2 \trii,\\
r_{9} &= 3 \chordtwoc - 2 \chordtha - \chordthb, \\
r_{10} &= 3 \chordtha - 2 \four - \chordthab, \\
r_{11} &= 3 \chordthb - 2 \chordthab - \triFour,~{\text{and}}~\\
r_{12} &= \trisox + 2 \chordtwx - \four - 2 \chordthab.\\
\end{align*}
\begin{figure}[b]
\includegraphics[width=12cm]{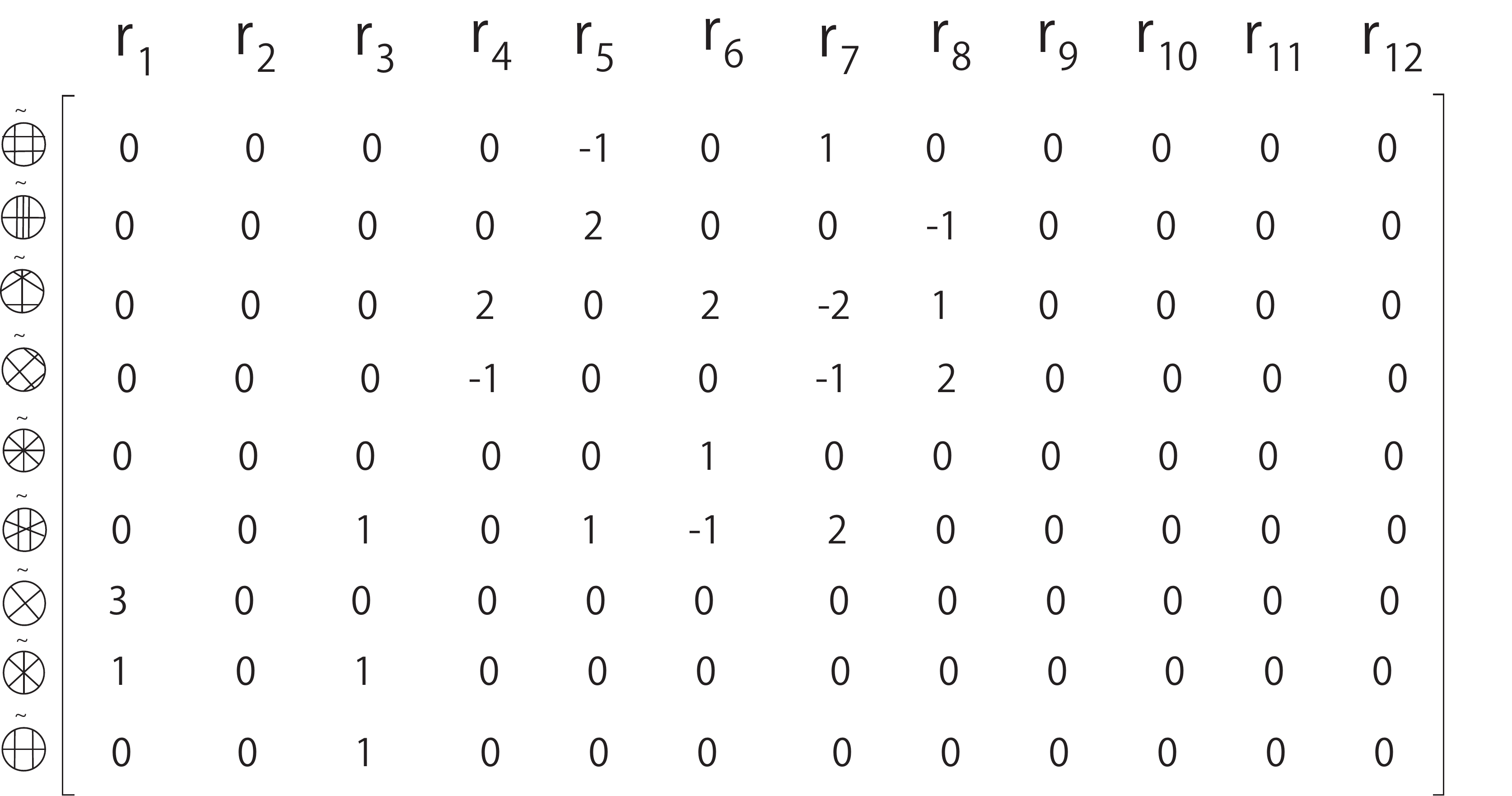}
\caption{Matrix $(\tilde{y}_i (r_j))_{1 \le i \le 9, 1 \le j \le 12}.$}\label{table1}
\end{figure}
%Recall also that, by Lemma~\ref{lem_relator}, $\tilde{y}_i (r_j)$ $=$ $\tilde{y}_i (O_{2, 4} (r_j))$ ($1 \le i \le 9, 1 \le j \le 12$) for $y_i \in G_{2, 4}$ ($1 \le i \le 9$).    
We can find $\alpha_i$ ($1 \le i \le 9$) satisfying 
$(\sum_{i=1}^9 \alpha_i \tilde{y}_i)(r)$ $=$ $0$ ($\forall r \in {R}_{00010}(2, 4)$)
by solving the linear equation ${\bf{x}} M$ $=$ $\bf{0}$, where $M$ $=$ $(\tilde{y}_i (r_j) )$ (Figure~\ref{table1}).  It is elementary to show that the set of solutions is $\{ {\bf{x}}$ $=$ $\gamma_1 (1, -3, 3, 0, 0, 0, 0, 0, 0)$, $\gamma_2 (0, 0, 2, 8, 5, 1, -4, 2, -2)$ $|~\gamma_1, \gamma_2 \in \mathbb{Z}\}$.  
Particularly, $y_1$ $-3 y_2$ $+$ $3 y_3$ ($=$ $\x$ $-3 \tr$ $+ 3 \h$) and 
$2 y_3$ $+$ $8 y_4$ $+$ $5 y_5$ $+$ $y_6$ $-$ $4 y_7$ $+$ $2 y_8$ $-$ $2 y_9$ ($=$ $2 \h$ $+$ $8 \sh$ $+$ $5 \hthc$ $+$ $\htri$ $- 4 \qua$ $+$ $2 \crh$ $- 2 \ma$)
 are invariant under RI and strong RI\!I\!I.  
 
Let $\lambda^{(3)}$ $=$ $y_1$ $-3 y_2$ $+$ $3 y_3$ and let $\lambda^{(4)}$ $=$ $2 y_3$ $+$ $8 y_4$ $+$ $5 y_5$ $+$ $y_6$ $-$ $4 y_7$ $+$ $2 y_8$ $-$ $2 y_9$.  Note that $\lambda^{(3)}$ is the function obtained in Example~\ref{ex_00010}, hence $\lambda^{(3)}$ $=$ $4 \lambda$ where $\lambda$ is the function that appears in Theorem~1 of \cite{IT3}.    
We further note that in \cite{IT3}, it is shown that $\lambda$ (hence, $\frac{1}{4} \lambda^{(3)}$) is a complete invariant of prime spherical  curves up to seven double points (Table~\ref{t1}).    
We note that $\lambda^{(4)}$ is also a complete invariant of prime spherical curves up to seven double points.  
We further note that $\lambda^{(4)}$ is stronger than $\lambda^{(3)}$ in some sense.  In fact, $\lambda (4_1)$ $=$ $\lambda(8_9)$ $=$ $4$ and $\lambda^{(4)} (4_1)$ ($= 16$) $\neq$ $\lambda^{(4)}(8_9)$ $(= 40)$.  Similarly, we have $\lambda(5_1)$ $=$ $\lambda (8_2)$ $=$ $-5$ and $\lambda^{(4)}(5_1)$ ($= -20$) $\neq$ $\lambda^{(4)} (8_2)$ $(= -28)$, and $\lambda(8_5)$ $=$ $\lambda(8_{17})$ $=$ $12$ and $\lambda^{(4)}(8_5)$ ($= 128$) $\neq$ $\lambda^{(4)}(8_{17})$ $(= 116)$.

\begin{table}[h!]
\caption{Values of $\frac{1}{4}\lambda^{(3)}$ ($=$ $\lambda$) and $\lambda^{(4)}$ for prime spherical curves with at most seven double points.  Any pair of prime spherical curves in the same box in the leftmost column are related by a finite sequence of Reidemeister moves of types RI and strong~RI\!I\!I.}\label{t1}
\includegraphics[width=12cm]{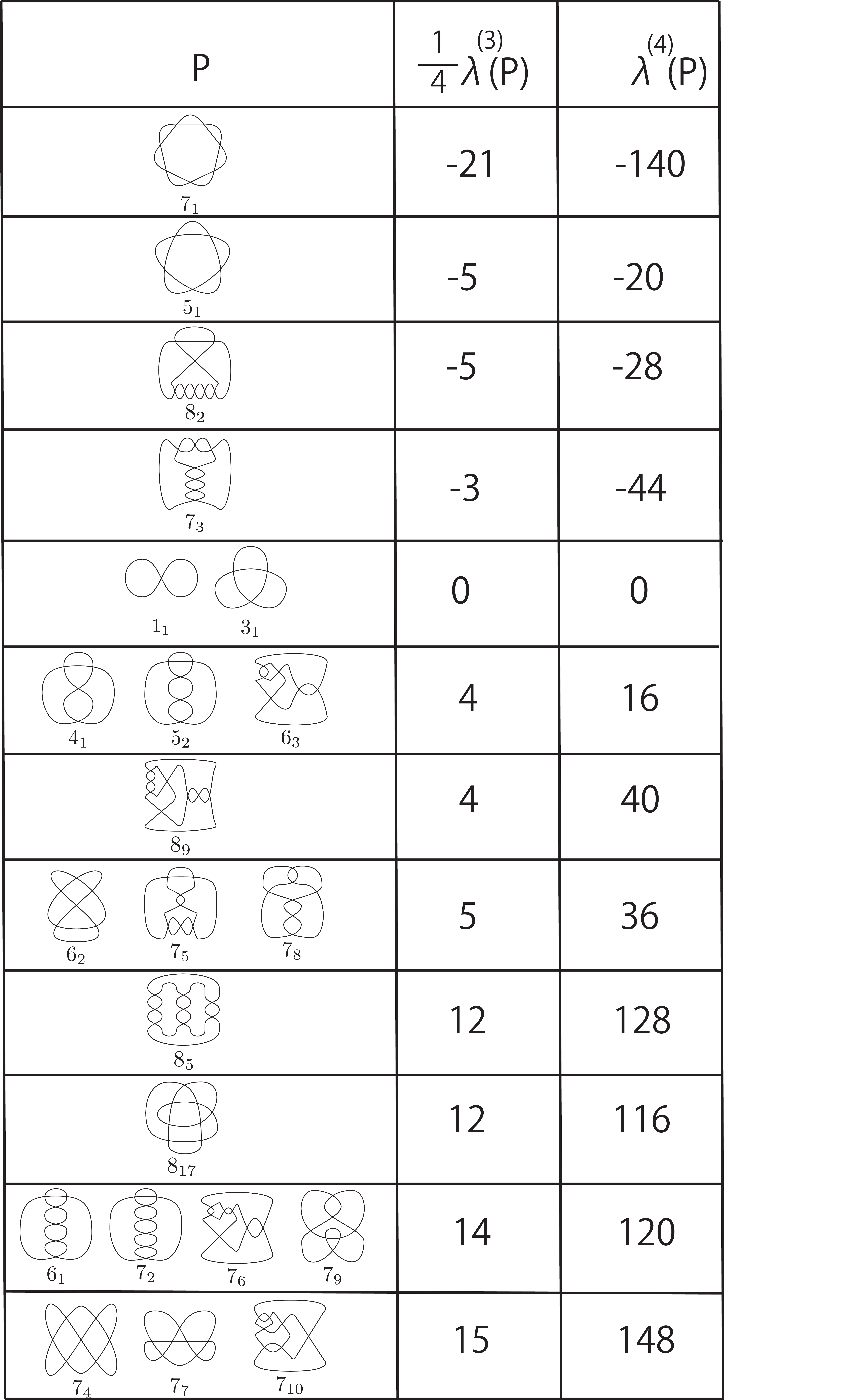}
\end{table}

\section*{Acknowledgements}
The author would like to thank Professors Tsuyoshi Kobayashi and Kouki Taniyama for helpful discussions.  The author also would like to thank Mr.~Yusuke Takimura for creating and providing many figures of spherical curves with indices (Table~\ref{t1}) and for useful discussions.  Further, the author would like to thank Professor Yukari Funakoshi and Ms.~Megumi Hashizume for their comments on an earlier version of this paper.   The author would like to thank the referee for the comments.  
The author was a project researcher of Grant-in-Aid for Scientific Research (S) 24224002 (April 2016--March 2017).


\begin{thebibliography}{99}
%\bibitem{arnold} V. I. Arnold, Topological invariants of plane curves and caustics.  Dean Jacqueline B.  Lewis Memorial Lectures presented as Rutgers University, New Brunswick, New Jersey.  University Lectures Series, 5.  {\it{American Mathematical Society, Providence, RI,}} 1994.  
%\bibitem{gpv} M.~Goussarov, M.~Polyak, O.~Viro, Finite-type invariants of classical and virtual knots, \emph{Topology} {\bf{39}} (2000), 1045--1068.  
\bibitem{BT} R.~Bott and C.~Taubes, On the self-linking of knots, Topology and physics, \emph{J. Math.\ Phys.} {\bf{35}} (1994), 5247--5287.   
\bibitem{G} M.~Goussarov, M.~Polyak, and O.~Viro, Finite-type invariants of classical and virtual knots, \emph{Topology} {\bf{39}} (2000), 1-45--1068.    
\bibitem{HY} T.~Hagge and J.~Yazinski, On the necessity of Reidemeister move 2 for simplifying immersed planar curves, {\it{Banach Center Publ.\ }}{\bf{103}} (2014), 101--110.
%\bibitem{HI} K. Hayano and N. Ito, A new aspect of the Arnold invariant $J^+$ from a global viewpoint, {\it{Indiana Univ.\ Math.\ J.}}  {\bf{64}} (2015), 1343--1357.    
\bibitem{IT_weak13} N.~Ito and Y.~Takimura, (1, 2) and weak (1, 3) homotopies on knot projections, \emph{J.~Knot Theory and Ramifications} {\bf{22}} (2013), 1350085, 14pp.  
\bibitem{IT3} N. Ito and Y. Takimura, Sub-chord diagrams of knot projections, {\it{Houston J. Math.}} {\bf{41}} (2015), 701--725.
%\bibitem{IT_tabulation} N.~Ito and Y.~Takimura, A tabulation of prime knot projections up to eight double points, preprint.  
\bibitem{ITT} N.~Ito, Y.~Takimura, and K.~Taniyama, Strong and weak (1, 3) homotopies on knot projections, {\it{Osaka J. Math.\ }} {\bf{52}} (2015), 617--646.   
%\bibitem{ITT_ildt} N.~Ito, Y.~Takimura, and K.~Taniyama, Strong and weak (1, 3) homotopies on spherical curves and related topics, \emph{Intelligence of Low-dimensional Topology} {\bf{1960}} (2015), 101--106.   
\bibitem{ostlund} O.-P.~\"{O}stlund, A diagrammatic approach to link invariants of finite degree, {\it{Math.\ Scand.}} {\bf{94}} (2004), 295--319.  
%\bibitem{P} M.~Polyak, Invariants of curves and fronts via Gauss diagrams, \emph{Topology} {\bf{37}} (1998), 989--1009.   
\bibitem{PV} M.~Polyak and O.~Viro, Gauss diagram formulas for Vassiliev invariants, {\it{Internat.\ Math.\ Res.\ Notices}} {\bf{1994,}} 445ff., approx. 8pp. (electronic).  
\bibitem{Va} V.~A.~Vassiliev, Cohomology of knot spaces, \emph{Theory of singularities and its applications,} 23--69, Adv.\ Soviet Math., 1, \emph{Amer.\ Math.\ Soc., Providence, RI,} 1990.  
\bibitem{Viro1996} O.~Viro, Generic immersions of the circle to surfaces and the complex topology of real algebraic curves.  \emph{Topology of real algebraic varieties and related topics,} 231--252, Amer.\ Math.\ Soc.\ Transl.\ Ser.~2, 173, Adv.\ Math.\ Sci., 29, \emph{Amer.\ Math.\ Soc., Providence, RI,} 1996.   
%\bibitem{takimura} Y.~Takimura, private communication.  
\end{thebibliography}
\end{document}